\newtheorem{theorem}{Theorem}[section]
\newtheorem{lemma}[theorem]{Lemma}
\newtheorem{remark}{Remark}[section]
\def\N{\mathbb{N}}
\begin{document}

\title{The multifractal spectra of  V-statistics}

\date{}

\author[Aihua Fan]{Aihua Fan}

\author[J\"{o}rg Schmeling]{J\"{o}rg Schmeling}

\author[Meng Wu]{Meng Wu}

\address[Aihua Fan]{LAMFA, UMR 7352 CNRS, University of Picardie,
33 rue Saint Leu, 80039 Amiens, France}\email{ai-hua.fan@u-picardie.fr}

\address[J\"{o}rg Schmeling]{MCMS,
Lund Institute of Technology, Lund University
Box 118
SE-221 00 Lund, Sweden}
\email{joerg@maths.lth.se}

\address[Meng Wu]{LAMFA, UMR 7352 CNRS, University of Picardie,
33 rue Saint Leu, 80039 Amiens, France}
\email{meng.wu@u-picardie.fr}

\begin{abstract}
 Let $(X, T)$ be a topological dynamical system and let $\Phi: X^r \to \mathbb{R}$
 be a continuous function on the product space $X^r= X\times \cdots \times X$ ($r\ge 1$).
 We are interested in the limit of V-statistics taking $\Phi$ as kernel:
\[
\lim_{n\to \infty} n^{-r}\sum_{1\le i_1, \cdots, i_r\le n} \Phi(T^{i_1}x, \cdots, T^{i_r} x).
\]
The multifractal spectrum of topological entropy of the above limit is expressed by a variational
principle when the system satisfies the specification property.  Unlike the classical case ($r=1$)
where the spectrum is an analytic function when $\Phi$ is H\"{o}lder continuous, the spectrum of the limit of  higher order
 V-statistics ($r\ge 2$) may be  discontinuous even for very nice kernel $\Phi$.
\end{abstract}

\maketitle

\section{Introduction}

Consider a topological dynamical system $(X, T)$, where $T: X \to X$ is a continuous transformation
on a compact metric space $X$ with metric $d$.  For $r\ge 1$, let $X^r = X\times \cdots \times X$ (product of $r$
copies of $X$) and let $C(X^r)$ be the space of
continuous functions $\Phi: X^r \to \mathbb{R}$.

 For $\Phi \in C(X^r)$ and $n\ge 1$, let
 $$
          V_\Phi(n, x) = n^{-r}\sum_{1\le i_1, \cdots, i_r\le n} \Phi(T^{i_1}x, \cdots, T^{i_r} x)
 $$
 and
 $
         V_\Phi(x) = \lim_{n\to \infty } V_\Phi(n, x)
 $
 if the limit exists. For $\alpha \in \mathbb{R} $, define
 $$
        E_\Phi(\alpha) =\left\{x \in X:  \lim_{n\to \infty } V_\Phi(n, x) = \alpha\right\}.
 $$

 The problem treated in the present paper is to measure the sizes of the sets $E_\Phi(\alpha)$.
 To measure the sizes of the sets $E_\Phi(\alpha)$,
 we adopt the notion of topological entropy introduced by Bowen
(\cite{Bowen}), denoted by $h_{\rm
top}$.  We denote by
$\mathcal{M}_{\rm inv}$ the set of all $T$-invariant probability
Borel measures on $X$ and by $\mathcal{M}_{\rm erg}$ its subset of
all ergodic measures. The measure-theoretic entropy of $\mu$ in
$\mathcal{M}_{\rm inv} $ is denoted by $h_\mu$.

For $\mu \in \mathcal{M}_{\rm inv}$,  the set $G_{\mu}$ of {\em
$\mu$-generic points}  is defined by
$$
   G_{\mu}:=\left\{x \in X: \frac{1}{n}\sum_{j=0}^{n-1} \delta_{T^j x}
   \stackrel{w^*}{\longrightarrow} \mu \right\},
$$
where $\stackrel{w^*}{\longrightarrow}$ stands for the weak star
convergence of  measures.
Bowen (\cite{Bowen}) proved that on any dynamical system, we
have $h_{\rm top} (G_\mu)\le h_\mu$ for any $\mu \in
\mathcal{M}_{\rm inv}$. For ergodic measure measure $\mu$, we get equality. But in general,
the equality doesn't hold.
A dynamical system $(X, T)$ is said to be {\em saturated} if for
any $\mu \in \mathcal{M}_{\rm inv}$, we have $h_{\rm
top}(G_{\mu})=h_{\mu}$.
It is proved in \cite{FLP} that
 systems of specification are saturated.

In this paper, we shall prove a variational principle which relates the topological entropy  $h_{\rm
top}(E_\Phi(\alpha))$ to the measure theoretic entropies of invariant measures in the following set, called
$(\Phi, \alpha)$-fiber,
$$
    \mathcal{M}_\Phi(\alpha) =\left\{\mu \in \mathcal{M}_{\rm inv} : \int_{X^r} \Phi d \mu^{\otimes r}=\alpha \right\}
$$
where $\mu^{\otimes r} =\mu \times \cdots \times \mu$ is the product of $r$ copies of $\mu$.

\begin{theorem}\label{VP}
Suppose that the dynamical system $(X, T)$ is saturated. Let $\Phi \in C(X^r)$ ($r\ge 1$). 
If  $
\mathcal{M}_\Phi(\alpha)= \emptyset$, we have $E_\Phi(\alpha)=\emptyset$.
If  $ \mathcal{M}_\Phi(\alpha)\not=
\emptyset$, we have
\begin{equation}\label{variational-principle}
         h_{\rm top} (E_\Phi(\alpha)) =
        \sup_{\mu \in \mathcal{M}_\Phi(\alpha)} h_\mu.
\end{equation}
\end{theorem}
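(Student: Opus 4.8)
The plan is to pass from V-statistics to empirical measures and then feed the result into the entropy of generic points. For $x\in X$ write $\mathcal{E}_n x=\frac1n\sum_{i=1}^n\delta_{T^ix}$ for the $n$-th empirical measure and set $g(\nu)=\int_{X^r}\Phi\,d\nu^{\otimes r}$ for $\nu\in\mathcal{M}(X)$. The first step is the identity
\[
V_\Phi(n,x)=\int_{X^r}\Phi\,d(\mathcal{E}_n x)^{\otimes r}=g(\mathcal{E}_n x),
\]
which is immediate from expanding the product measure. I would then check that $g$ is continuous for the weak$^*$ topology: since the functions $\phi_1\otimes\cdots\otimes\phi_r$ span a dense subspace of $C(X^r)$ by Stone--Weierstrass, the map $\nu\mapsto\nu^{\otimes r}$ is weak$^*$ continuous, hence so is $g$. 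Writing $V(x)$ for the nonempty, compact set of weak$^*$ limit points of $(\mathcal{E}_n x)_n$, all of which are $T$-invariant, continuity of $g$ yields the clean reformulation
\[
x\in E_\Phi(\alpha)\iff g(\mathcal{E}_n x)\to\alpha\iff V(x)\subseteq\mathcal{M}_\Phi(\alpha).
\]
Since $V(x)\neq\emptyset$ always, the case $\mathcal{M}_\Phi(\alpha)=\emptyset$ forces $E_\Phi(\alpha)=\emptyset$, settling the first assertion.

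For the lower bound I would argue directly, avoiding any union over fibers. If $\mu\in\mathcal{M}_\Phi(\alpha)$ and $x\in G_\mu$ then $\mathcal{E}_n x\to\mu$, so $g(\mathcal{E}_n x)\to g(\mu)=\alpha$, whence $G_\mu\subseteq E_\Phi(\alpha)$. Monotonicity of Bowen entropy together with saturation ($h_{\rm top}(G_\mu)=h_\mu$) gives $h_{\rm top}(E_\Phi(\alpha))\ge h_\mu$ for every $\mu\in\mathcal{M}_\Phi(\alpha)$, and taking the supremum yields the inequality $\ge$ in \eqref{variational-principle}.

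The upper bound is the main obstacle, and here saturation is \emph{not} used --- only a Bowen-type covering estimate valid on any system. Put $K=\mathcal{M}_\Phi(\alpha)$, which is compact as a closed subset of the compact set $\mathcal{M}_{\rm inv}$, and $s=\sup_{\mu\in K}h_\mu$. From $V(x)\subseteq K$ and compactness of $\mathcal{M}(X)$ one first shows that for $x\in E_\Phi(\alpha)$ the measures $\mathcal{E}_n x$ eventually enter any prescribed weak$^*$ neighborhood of $K$. The heart of the matter is the local estimate (the easy half of the variational principle): for each $\mu\in\mathcal{M}_{\rm inv}$ and $\delta>0$ there are a weak$^*$ neighborhood $F_\mu$ of $\mu$ and $\epsilon_0>0$ so that, for $\epsilon<\epsilon_0$, every $(n,\epsilon)$-separated subset of $\{x:\mathcal{E}_n x\in F_\mu\}$ has cardinality at most $e^{n(h_\mu+\delta)}\le e^{n(s+\delta)}$. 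I would cover the compact set $K$ by finitely many such neighborhoods $F_{\mu_1},\dots,F_{\mu_m}$, use the preceding confinement step to keep the orbit segments of points of $E_\Phi(\alpha)$ inside these neighborhoods from some time on, and assemble the associated Bowen balls into admissible covers in Bowen's Carath\'eodory construction. Estimating the total weight at exponent $\lambda=s+\delta$ then gives $h_{\rm top}(E_\Phi(\alpha))\le s+\delta$, and letting $\delta\to0$ finishes.

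The delicate points I expect to fight with all live in this last paragraph: making the local separated-set bound uniform over the compact fiber $K$, and handling the fact that the time after which $\mathcal{E}_n x$ settles near $K$ varies with $x$. The latter I would resolve by stratifying $E_\Phi(\alpha)=\bigcup_N\{x:\mathcal{E}_n x\in U\ \forall\,n\ge N\}$ over the settling time $N$; this is a \emph{countable} union, so countable stability of Bowen entropy lets me treat each stratum separately. I would deliberately avoid deriving the upper bound from a fiberwise identity of the form $h_{\rm top}(\{x:V(x)=K'\})=\inf_{\mu\in K'}h_\mu$ ranging over subsets $K'\subseteq K$, precisely because that decomposition of $E_\Phi(\alpha)$ is uncountable while Bowen entropy is only countably stable; the direct covering argument sketched above sidesteps this.
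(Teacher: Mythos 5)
Your reformulation via empirical measures ($V_\Phi(n,x)=g(\mathcal{E}_nx)$ with $g$ weak$^*$ continuous by Stone--Weierstrass), your proof of the first assertion, and your lower bound are all correct and agree in substance with the paper's proof: the paper runs the same Stone--Weierstrass approximation (with an explicit four-term error estimate) to show that every $x\in E_\Phi(\alpha)$ produces an element of $\mathcal{M}_\Phi(\alpha)$ inside $V(x)$, and proves the lower bound exactly as you do, via $G_\mu\subseteq E_\Phi(\alpha)$ plus saturation. The genuine gap is your upper bound, and it lies precisely in the ``local estimate'' you call the heart of the matter. As stated --- a single weak$^*$ neighborhood $F_\mu$ and a single $\epsilon_0$ such that for \emph{all} $\epsilon<\epsilon_0$ every $(n,\epsilon)$-separated subset of $\{x:\mathcal{E}_nx\in F_\mu\}$ has cardinality at most $e^{n(h_\mu+\delta)}$ --- this is false on general compact systems, and false on systems covered by the theorem. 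A bound uniform in arbitrarily small $\epsilon$ for one fixed neighborhood forces the entropy map to be upper semicontinuous at $\mu$ (any ergodic $\nu\in F_\mu$ has typical points whose empirical measures eventually lie in $F_\mu$, and for $\epsilon$ small these furnish roughly $e^{nh_\nu}$ many $(n,\epsilon)$-separated points, whence $h_\nu\le h_\mu+\delta$); but saturated systems need not have upper semicontinuous entropy. Concretely, let $X=[0,1]^{\mathbb{N}}$ with the shift and $d(x,y)=\sum_{i\ge 0}2^{-i}|x_i-y_i|$. This system has specification (shadow each prescribed orbit segment by copying its coordinates and extending the copied block $m(\epsilon)\approx\log_2(1/\epsilon)$ symbols past its window), hence is saturated by Lemma~\ref{FLP}. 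Take $\mu=\delta_{\bar 0}$, $\bar 0=(0,0,\dots)$, so $h_\mu=0$, and let $\nu_m$ be the i.i.d.\ measure with marginal uniform on $\{0,\epsilon_m,2\epsilon_m,\dots,m\epsilon_m\}$, $m\epsilon_m\to 0$. Then $\nu_m\to\delta_{\bar 0}$ weak$^*$ while $h_{\nu_m}=\log(m+1)$. Given any $F_\mu\ni\delta_{\bar 0}$ and any $\epsilon_0$, pick $m$ with $\nu_m\in F_\mu$ and $\log(m+1)>\delta$, then $\epsilon<\min(\epsilon_0,\epsilon_m)$: by the ergodic theorem and Egorov there is a set $Y$ with $\nu_m(Y)\ge\tfrac12$ and $\mathcal{E}_nx\in F_\mu$ for all $x\in Y$ and all large $n$; points of $Y$ with distinct length-$n$ coordinate strings are $(n,\epsilon)$-separated, and $Y$ meets at least $\tfrac12(m+1)^n$ cylinders, so $\{x:\mathcal{E}_nx\in F_\mu\}$ contains $(n,\epsilon)$-separated sets of cardinality $\ge\tfrac12(m+1)^n\gg e^{n\delta}$. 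The same example shows your enlarged sets $\bigcup_N\{x:\mathcal{E}_nx\in U\ \forall n\ge N\}$ have topological entropy at least $\sup\{h_\nu:\nu \mbox{ ergodic},\ \nu\in U\}$, which can exceed $\sup_{\mu\in K}h_\mu$ by a fixed amount (here: infinitely) for \emph{every} neighborhood $U$ of $K$; so no covering argument run purely on eventual weak$^*$ confinement near $K$ can close the upper bound.

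The estimate can be repaired only by reversing the quantifiers: for fixed $\mu$, $\delta$ \emph{and} $\epsilon$, choose a finite partition of diameter $<\epsilon$ with $\mu$-null boundaries, a block length $k$ with $\tfrac1k H_\mu$ of the $k$-th join within $\delta/2$ of $h_\mu(\mathcal{A})\le h_\mu$, and only then a neighborhood $F_{\mu,\epsilon}$; a method-of-types count of names then bounds $(n,\epsilon)$-separated sets by $e^{n(h_\mu+\delta)}$. (This is what defeats the counterexample: once the partition has $\delta_{\bar 0}$ interior to one atom, the high-entropy measures near $\delta_{\bar 0}$ have all their complexity at scales finer than the partition and contribute only one name.) Running your stratification for each $\epsilon$ separately and letting $\epsilon\to 0$ at the end then works --- but this repaired statement \emph{is} Bowen's lemma, $h_{\rm top}(B^{(t)})\le t$ (Lemma~\ref{Bowen}), proved by Bowen in essentially this way. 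The paper's upper bound is a two-line consequence of it, using exactly your equivalence $x\in E_\Phi(\alpha)\Leftrightarrow V(x)\subseteq\mathcal{M}_\Phi(\alpha)$: this gives $E_\Phi(\alpha)\subseteq B^{(t)}$ with $t=\sup_{\mu\in\mathcal{M}_\Phi(\alpha)}h_\mu$, and Lemma~\ref{Bowen} finishes. So you should either invoke Bowen's lemma at this point, or fix the quantifier order and redo the type-counting proof; your outline as written is only valid on systems whose entropy map is upper semicontinuous (e.g.\ the subshifts of Section 5), which is strictly less than what the theorem asserts.
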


Theorem~\ref{VP} is well known when $r=1$ (see e.g. \cite{FFW,FLP,BSS,Barreira}).
In particular, it is known that  for regular potential $\Phi$, $\alpha \mapsto  h_{\rm top} (E_\Phi(\alpha))$
is an analytic function (see e.g. \cite{Fan1994,Ruelle}). But as we shall see, when $r\geq 2$, this function can admit discontinuity even for "very regular"
potentials.

The above consideration was motivated by the following problem. Recently the multiple ergodic limit 
\begin{equation}\label{MEA}
      M_{\Phi}(x):=
\lim_{n\to\infty}\frac{1}{n}\sum_{i=0}^{n-1}\Phi(\sigma^ix,\sigma^{2i}x,\cdots,T^{ri}x)
\end{equation}
have been studied by Furstenberg (\cite{Furstenberg}), Bergelson (\cite{Bergelson}), Bourgain (\cite{Bourgain}), Assani (\cite{Assani}), Host and Kra (\cite{HK}), and others. Fan, Liao and Ma proposed in \cite{FLM} to give a multifractal analysis of the multiple ergodic average $M_{\Phi}$, in other words, to determine the Hausdorff dimensions of the level sets
$$L_{\Phi}(\alpha)=\{x\in X: M_{\Phi}(x)=\alpha \}.$$
This problem in its generality remains open.

However, there are two results for the shift dynamics on symbolic space and for some special potentials $\Phi$. The first one concerns the case where $X=\{-1,1\}^\N$, $T$ is the shift and $\Phi(x_1,\cdots,x_r)=x_1^{(1)}\cdots x_r^{(1)}$ ($x_i^{(1)}$ being the first coordinate of $x_i$). By using Riesz products, the authors in \cite{FLM} proved that for $\alpha\in [-1,1]$ we have
$$\dim L_{\Phi}(\alpha)=1-\frac{1}{r}-\frac{1}{r}\left(\frac{1-\alpha}{2}\log_2\frac{1-\alpha}{2}+\frac{1+\alpha}{2}\log_2\frac{1+\alpha}{2}\right).$$
The second one concerns the case where $X=\{0,1\}^\N$, $T$ is the shift and $\Phi(x_1,x_2)=F(x_1^{(1)},x_2^{(1)})$ is a function depending only on the first coordinates $x_1^{(1)}$ and $x_2^{(1)}$ of $x_1$ and $x_2$. The multifractal analysis of these double ergodic average was determined in \cite{FSW}. A related work was done in \cite{KPS} answering a question in \cite{FLM} about the Hausdorff dimension of a subset of $L_{\Phi}(\alpha)$  for extremal values of $\alpha$. 

As shown in \cite{FSW}, the dimension of the ``mixing part'' of $L_{\Phi}(\alpha)$ which is defined by 
$$\sup \left\{\dim \mu : \mu(L_{\Phi}(\alpha))=1,\  \mu \ {\rm is\ mixing}\right\}$$
is equal to 
$$\sup\left\{\dim \mu : \int \Phi d\mu^{\otimes r}=\alpha,\ \mu\ {\rm is \ mixing}\right\}.$$
This equality is very similar to the variational principal stated in Theorem \ref{VP}.

In Section \ref{V-stat}, we recall some facts about V-statistics. In Section \ref{Top Ent}, we recall some notions like topological entropy, generic points and specification property. The main theorem, Theorem \ref{VP}, is proved in Section \ref{proof}. In Section \ref{examples}, we examine the special case of full shift together with some examples. We will see that, even for very regular function $\Phi$, the function $\alpha\to h_{\rm top} (L_\Phi(\alpha))$ may admit discontinuity. 

To finish this introduction, we emphasise that the problem of multifractal analysis of multiple ergodic limits remains largely open.

\section{V-statistics}\label{V-stat}
V-statistics are tightly related to  U-statistics which are well known in statistics.
Let $\mu$ be a probability law  on $\mathbb{R}$. A U-parameter of $\mu$ is defined through
a function called  kernel $h: \mathbb{R}^d \to \mathbb{R}$ by
$$
      \theta(\mu) = \theta_h(\mu) = \int_{\mathbb{R}^d} h d \mu^{\otimes d}
$$
 where $\mu^{\otimes d}$ is the product measure
$\mu \times \cdots \times \mu$ ($d$ times) on $\mathbb{R}^d$.
This $U$-statistics is well defined for all $\mu$ such that the integral exists.

In statistics, U-parameters are also
called estimable parameters and they constitute the set of all parameters that can be estimated in an unbiased
fashion. A fundamental problem in statistics is the estimation of a parameter $\theta(\mu)$
for an unknown probability law $\mu$. To estimate a U-parameter $\theta_h$, people employ the
U-statistics for $\theta_h$:
$$
    U_h(X_1, \cdots, X_n) =\frac{(n-d)!}{n!} \sum h(X_{i_1}, \cdots, X_{i_d})
$$
where the sum is taken over all $(i_1, \cdots, i_d)$ with $i_j$'s distinct and $1\le i_j\le n$,
where $X_1, \cdots, X_d$ is a sequence of observations of $\mu$. Closely related to U-statistics
is the V-statistics (von Mises statistics):
$$
     V_h(X_1, \cdots, X_n) = n^{-d}\sum_{1\le i_1, \cdots, i_d \le n} h(X_{i_1}, \cdots, X_{i_d}).
$$

People expect that $U_h(X_1, \cdots, X_n)$ converges almost surely to $\theta_h$.
This fact, if it holds, allows one to estimate $\theta_h$ using observations.
If it is the case, we say the U-parameter strong law of large numbers (SLLN) holds.
The U-statistics SLLN had been well studied for  independent observations. In \cite{ABDGHW},
the authors have studied the  U-statistics SLLN for ergodic stationary process
$(X_n)$, i.e. $X_n = f\circ T^n$ where $T$ is ergodic measure-preserving transformation
on a probability space $(\Omega, \mathcal{A}, \mathbb{P})$, $f: \Omega \to \mathbb{R}$ is a measurable function
and $X_1$ admits $\mu$ as probability law.

If $h$ is a kernel bounded by a integrable function and if $(X_n)$ is ergodic, it can be proved (see \cite{ABDGHW}) that
almost surely
$$
     \lim_{n\to \infty}|U_h(X_1, \cdots, X_n)- V_h(X_1, \cdots, X_n)|=0.
$$
It is also proved in \cite{ABDGHW} that the U-statistics SLLN holds if the kernel $h$ is continuous.
In the following, we consider only V-statistics.

\section{Topological entropy}\label{Top Ent}
For any
integer $n\ge 1$,  the Bowen metric $d_n$ on $X$ is defined by
$$
      d_n(x, y) = \max_{0\le j <n} d(T^jx, T^j y).
$$
For any $\epsilon >0$, we will denote by $B_n(x, \epsilon)$ the
open $d_n$-ball centered at $x$ of radius $\epsilon$.

Let $ Z \subset X$ be a subset of $X$. Let $\epsilon >0$. A cover is a
collection of Bowen balls (at most countable) $R=\{B_{n_i}(x_i,
\epsilon)\}$ such that $Z \subset \bigcup_i B_{n_i}(x_i,
\epsilon)$.  For such a cover $R$, we put $n(R) = \min_i
n_i$.  Let $s\ge 0$. Define
$$
     H^s_n (Z, \epsilon) = \inf_R \sum_i \exp(-s n_i),
$$
where the infimum is taken over all covers $R$ of $Z$ with $n(R)
\ge n$.  The quantity $H^s_n (Z, \epsilon)$ being a non-decreasing
function of $n$, the following limit exists
$$
   H^s (Z, \epsilon)  = \lim_{n \to \infty} H^s_n (Z, \epsilon).
$$
Consider the quantity $H^s (Z, \epsilon)$  as a function of
$s$, there exists a critical value, which we denote by $h_{\rm
top} (Z, \epsilon)$, such that
$$
     H^s (Z, \epsilon) =\left\{ \begin{array} {ll} +\infty, & s <
                        h_{\rm top} (Z, \epsilon) \\ 0 , & s> h_{\rm
                        top} (Z, \epsilon).  \end{array} \right.
$$
The following limit exists
$$
          h_{\rm top} (Z) = \lim_{\epsilon \to 0} h_{\rm top} (Z,
          \epsilon).
$$
The limit $h_{\rm top} (Z)$ is called the {\em topological
entropy} of $Z$ (\cite{Bowen}).

For $x \in X$, we denote by $V(x)$ the set of all weak limits of
the sequence of probability measures $n^{-1}\sum_{j=0}^{n-1}
\delta_{T^j x}$. Recall that $X$ is compact.    It is clear then that for any $x$ we have
 $$ \emptyset \not= V(x)
\subset \mathcal{M}_{\rm inv}.$$
 The following lemma is due to Bowen (\cite{Bowen}).

\begin{lemma}\label{Bowen}
    For $t\ge 0$, we have $h_{\rm top} (B^{(t)})\le t$ where
    $$
         B^{(t)} =\left\{x\in X:  \exists\  \mu \in V(x) \
         \mbox{\rm satisfying}\ h_\mu \le t\right\}.
    $$
\end{lemma}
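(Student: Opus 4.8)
The plan is to bound $H^s(B^{(t)},\epsilon)$ from above for every $\epsilon>0$ and every $s>t$, since $H^s(B^{(t)},\epsilon)=0$ forces $h_{\rm top}(B^{(t)},\epsilon)\le s$, and letting first $s\downarrow t$ and then $\epsilon\to 0$ yields $h_{\rm top}(B^{(t)})\le t$. Write $\mathcal{E}_n(x)=\frac1n\sum_{j=0}^{n-1}\delta_{T^jx}$ for the empirical measures, so that $\mu\in V(x)$ means $\mathcal{E}_{n_k}(x)\to\mu$ in the weak-$*$ sense along some subsequence $n_k\to\infty$. The whole argument rests on one local counting estimate, which I expect to be the main obstacle.

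\textbf{Key estimate.} For each invariant $\mu$ with $h_\mu\le t$ and each $\delta>0$, I claim there are a weak-$*$ neighbourhood $U=U(\mu)$ and an integer $N=N(\mu)$ such that for every $n\ge N$ the set $\{y:\mathcal{E}_n(y)\in U\}$ can be covered by at most $e^{n(t+\delta)}$ Bowen balls $B_n(\cdot,\epsilon)$. To prove it I would fix a finite partition $\mathcal P$ of $X$ into pieces of diameter $<\epsilon$ whose boundaries are $\mu$-null, so that each atom of $\mathcal P^n=\bigvee_{j=0}^{n-1}T^{-j}\mathcal P$ has $d_n$-diameter $<\epsilon$ and hence lies in a single Bowen ball; it then suffices to count the $\mathcal P^n$-names realized on $\{y:\mathcal{E}_n(y)\in U\}$. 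Choosing $m$ so large that $\frac1m H_\mu(\mathcal P^m)<h_\mu+\delta/2$ (possible since $\frac1m H_\mu(\mathcal P^m)\downarrow h_\mu(\mathcal P)\le h_\mu$), and taking $U$ small enough that $\nu\in U$ keeps $\nu(Q)$ within a fixed small tolerance of $\mu(Q)$ for every atom $Q\in\mathcal P^m$ (the portmanteau theorem applies because $\mu(\partial Q)=0$), I force the empirical frequencies of the length-$m$ blocks in the $\mathcal P$-name of $y$ to be close to $(\mu(Q))_{Q\in\mathcal P^m}$. A standard counting estimate (method of types, via Stirling) then bounds the number of such names by $e^{n(\frac1m H_\mu(\mathcal P^m)+\kappa)}\le e^{n(t+\delta)}$ for $n$ large, where $\kappa$ is controlled by the tolerance. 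This is the step I expect to be most delicate, because of the passage from weak-$*$ closeness to block frequencies and the need to realize the rate $h_\mu$ rather than the much larger single-site entropy $H_\mu(\mathcal P)$; note that no ergodicity of $\mu$ is used, in keeping with the generality of the lemma.

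\textbf{Assembly.} The neighbourhoods $U(\mu)$, as $\mu$ ranges over $\{h_\mu\le t\}$, form an open cover of this set inside the compact metrizable space $\mathcal{M}_{\rm inv}$; being second countable it is Lindel\"of, so I would extract a countable subfamily $U_1,U_2,\dots$ with associated measures $\mu_i$ ($h_{\mu_i}\le t$) and thresholds $N_i$ (this avoids any appeal to upper semicontinuity of $\mu\mapsto h_\mu$, which may fail). Every $x\in B^{(t)}$ has some $\mu_x\in V(x)$ with $h_{\mu_x}\le t$, whence $\mu_x\in U_i$ for some $i$, and then $\mathcal{E}_n(x)\in U_i$ for infinitely many $n$; letting $i(x)$ be the least such index I split $B^{(t)}=\bigsqcup_i B_i$ with $B_i=\{x:i(x)=i\}$. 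To cover $B_i$ by balls of length $\ge M$ I exploit that each $x\in B_i$ has $\mathcal{E}_n(x)\in U_i$ for infinitely many $n$, so I may insist on lengths $n\ge M_i:=\max(M+i,N_i)$; then $B_i\subset\bigcup_{n\ge M_i}\{y:\mathcal{E}_n(y)\in U_i\}$, and covering each level by the Key estimate contributes at most $\sum_{n\ge M_i}e^{n(t+\delta)}e^{-sn}\le e^{-\beta M_i}/(1-e^{-\beta})$ with $\beta:=s-t-\delta>0$ (choose $\delta<s-t$). Summing these geometric tails over all $i$ gives
\[
H^s_M(B^{(t)},\epsilon)\le\sum_{i\ge1}\frac{e^{-\beta M_i}}{1-e^{-\beta}}\le\frac{e^{-\beta M}}{(1-e^{-\beta})^2}\xrightarrow[M\to\infty]{}0,
\]
so $H^s(B^{(t)},\epsilon)=0$, which finishes the proof. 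The device of letting the minimal length $M_i$ grow with $i$ is exactly what prevents the countable cover from producing a divergent sum, and it is available precisely because each empirical orbit revisits its neighbourhood $U_i$ infinitely often.
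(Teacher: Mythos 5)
First, a point of reference: the paper does not prove this lemma at all --- it is quoted as Bowen's theorem with a citation to \cite{Bowen} --- so your proposal has to be judged against Bowen's original argument, whose architecture (a per-measure counting estimate over a weak-star neighbourhood, a Lindel\"of decomposition of $\{\mu\in\mathcal{M}_{\rm inv}:h_\mu\le t\}$, and a geometric-series estimate on Bowen's set function $H^s$) your proposal reproduces faithfully. Your assembly step is correct as written: second countability of $\mathcal{M}_{\rm inv}$ legitimately replaces any appeal to compactness of the sub-level set of entropy, the growing thresholds $M_i=\max(M+i,N_i)$ are available because each $x$ has $\mathcal{E}_n(x)\in U_{i(x)}$ for infinitely many $n$, and the bound $H^s_M(B^{(t)},\epsilon)\le e^{-\beta M}(1-e^{-\beta})^{-2}$ indeed forces $H^s(B^{(t)},\epsilon)=0$ for every $s>t$, hence $h_{\rm top}(B^{(t)},\epsilon)\le t$ for every $\epsilon$, which is the lemma. (More simply, $H^s(\cdot,\epsilon)$ is countably subadditive, so it would suffice to kill each $B_i$ separately.)

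The gap is in the key estimate, exactly where you predicted, and it is not closed by ``method of types, via Stirling''. The hypothesis $\mathcal{E}_n(y)\in U$ controls the \emph{sliding} (overlapping) frequencies $\frac1n\#\{0\le j<n: T^jy\in Q\}$, $Q\in\mathcal{P}^m$, whereas the multinomial/Stirling count applies to \emph{non-overlapping} block decompositions, and the two are not interchangeable. For a single word, the empirical distribution $\tau_0$ of its phase-$0$ non-overlapping $m$-blocks is related to the sliding distribution $\hat\pi$ only through the domination $\tau_0\le m\,\hat\pi$, and under that constraint alone $H(\tau_0)$ can be nearly $m$ times $H(\hat\pi)$ (take $\hat\pi=(1-\tfrac1m)\delta_{Q_0}+\tfrac1m\,\mathrm{unif}$ over $K$ blocks, which permits $\tau_0$ uniform over those $K$ blocks); concavity bounds the \emph{average} of the phase entropies by $H(\hat\pi)$, not each one. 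So counting names through their phase-$0$ types gives $e^{(n/m)\max H(\tau_0)}$, which can be as large as $e^{nH_\mu(\mathcal{P}^m)}$ rather than $e^{n(\frac1m H_\mu(\mathcal{P}^m)+\kappa)}$, and the $\frac1m$ gain is the whole point. What rescues the statement is the overlap consistency of an actual word, and you must use it. Two standard repairs: (i) count by contexts --- a word is determined by its first $m-1$ symbols together with, for each $Q'\in\mathcal{P}^{m-1}$, the ordered list of symbols following the successive occurrences of $Q'$; Stirling then bounds the number of words with prescribed transition counts by $e^{n(H(\hat\pi)-H(\hat\pi'))}$ up to polynomial factors, where $\hat\pi'$ is the $(m-1)$-block marginal, and since empirical sliding types are shift-consistent up to $O(m/n)$, this conditional entropy is at most $\frac1m H(\hat\pi)\le \frac1m H_\mu(\mathcal{P}^m)+\kappa(\eta)$; or (ii) drop name-counting and use Misiurewicz's averaging argument: for an $(n,\epsilon)$-separated set $E\subset\{y:\mathcal{E}_n(y)\in U\}$ and $\sigma$ the uniform measure on $E$, split $\mathcal{P}^n$ over the $m$ phases into translates of $\mathcal{P}^m$, use concavity of $\nu\mapsto H_\nu(\mathcal{P}^m)$ to compare with the averaged measure $\frac1{|E|}\sum_{y\in E}\mathcal{E}_n(y)$, and conclude by upper semicontinuity of $\nu\mapsto H_\nu(\mathcal{P}^m)$ at $\mu$ (here the null boundaries are used again). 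Either ingredient completes your proof; without one of them the central counting estimate is unproven, even though the estimate itself is true.
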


The set $G_\mu$ of $\mu$-generic points is the set of all $x$ such that
 $V(x)=\{\mu\}$.
The Bowen lemma implies that
$$h_{\rm
top}(G_\mu)\le h_\mu$$
for any invariant measure $\mu$. It is simply because
 $x \in G_\mu $ implies $\mu \in V(x)$. Bowen also proved that
the inequality becomes equality when $\mu$ is ergodic. However, in
general, we do not have the equality  and it is even possible that $G_\mu =
\emptyset$. In fact, $\mu(G_\mu) = 1 \ {\rm or}\ 0$ according to whether
$\mu$ is ergodic or not (see \cite{DGS}).

The equality $h_{\rm
top}(G_\mu)= h_\mu$ does hold for any invariant probability measure in  any dynamical
system with specification (\cite{FLP}).

\begin{lemma}\label{FLP}
    Any dynamical system with specification $(X, T)$ is saturated. In other words,
    $h_{\rm
top}(G_\mu)= h_\mu$ for any $\mu \in \mathcal{M}_{\rm inv}$.
\end{lemma}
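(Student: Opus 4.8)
The plan is to prove the two inequalities $h_{\rm top}(G_\mu)\le h_\mu$ and $h_{\rm top}(G_\mu)\ge h_\mu$ separately. The upper bound is immediate from Lemma~\ref{Bowen}: if $x\in G_\mu$ then $V(x)=\{\mu\}$, so in particular $x\in B^{(h_\mu)}$; hence $G_\mu\subset B^{(h_\mu)}$ and $h_{\rm top}(G_\mu)\le h_{\rm top}(B^{(h_\mu)})\le h_\mu$. This holds on every system and uses nothing but Bowen's lemma. So the whole content of the lemma is the reverse inequality $h_{\rm top}(G_\mu)\ge h_\mu$, and this is where the specification property must enter.

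To establish the lower bound I would fix an arbitrary $\delta>0$ and construct a Cantor-like subset $K\subset G_\mu$ with $h_{\rm top}(K)\ge h_\mu-\delta$; letting $\delta\to 0$ then gives the claim. The construction proceeds in two stages. First, \emph{choosing the building blocks.} Fix a sequence $\epsilon_k\downarrow 0$ together with a compatible sequence of weak-star neighbourhoods $U_k$ of $\mu$ shrinking to $\{\mu\}$. Using the description of the measure-theoretic entropy $h_\mu$ through $(n,\epsilon)$-separated sets (Katok's entropy formula applied to the ergodic components), for each $k$ and all sufficiently large lengths $n_k$ one obtains a family $\Gamma_k$ of points that are pairwise $(n_k,\epsilon_k)$-separated, whose empirical measures $\frac1{n_k}\sum_{j=0}^{n_k-1}\delta_{T^j x}$ lie in $U_k$, and whose cardinality satisfies $\#\Gamma_k\ge\exp\bigl(n_k(h_\mu-\delta)\bigr)$. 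When $\mu$ is not ergodic one first passes through its ergodic decomposition $\mu=\int\nu\,d\tau(\nu)$, so that $h_\mu=\int h_\nu\,d\tau(\nu)$, and assembles each block out of sub-blocks drawn from finitely many ergodic components in proportions matching $\tau$; this is what forces the empirical measures to approach $\mu$ itself rather than a single ergodic component.

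Second, \emph{concatenation via specification.} Let $m_k=m(\epsilon_k)$ be the gap length provided by the specification property at precision $\epsilon_k$. Choosing a rapidly increasing sequence of repetition counts $N_k$, I would glue together arbitrary finite strings of blocks drawn from $\Gamma_1,\dots,\Gamma_k$, inserting gaps of length $m_k$, and invoke specification to produce, for each prescribed string, a point whose orbit $\epsilon_k$-shadows it. The resulting nested construction yields a closed set $K$. Two facts must then be checked. (i) $K\subset G_\mu$: since each block is $U_k$-generic, the gaps have vanishing relative density once $N_k$ grows fast enough, and $\epsilon_k\to 0$, the empirical measures of every point of $K$ converge weak-star to $\mu$. (ii) $h_{\rm top}(K)\ge h_\mu-\delta$: equip $K$ with the measure that makes the independent choices of blocks at successive levels uniform on each $\Gamma_k$; a standard estimate bounds the mass of each Bowen ball $B_n(x,\epsilon)$ from above by roughly $\exp(-n(h_\mu-\delta))$, up to a factor accounting for the gaps, so the mass distribution principle yields the entropy lower bound.

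The main obstacle is the simultaneous control required in the first stage: one needs separated families that are both of nearly maximal exponential size $e^{n_k h_\mu}$ \emph{and} generic for the exact measure $\mu$, and in the non-ergodic case these two demands pull against each other and must be reconciled through the ergodic decomposition. The secondary technical point is ensuring that the specification gaps, which introduce both a loss of genericity and a loss in the entropy count, contribute negligibly; this is handled by letting the block lengths $n_k$ and repetition counts $N_k$ dominate the gap lengths $m_k$ so that their cumulative density tends to zero. Once these estimates are in place, the mass distribution principle converts the counting bound into the topological-entropy bound and the proof closes.
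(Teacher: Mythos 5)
The paper does not actually prove this lemma: it is imported as a known result from the reference \cite{FLP} (Fan--Liao--Peyri\`ere), so there is no internal proof to compare your attempt against. Judged on its own terms, and against the argument in that reference, your proposal takes the right route. The upper bound is complete as you give it: $x\in G_\mu$ forces $\mu\in V(x)$, hence $G_\mu\subset B^{(h_\mu)}$ and Lemma~\ref{Bowen} gives $h_{\rm top}(G_\mu)\le h_\mu$; this is exactly how the paper itself deduces the inequality, and it uses no specification. Your plan for the lower bound --- discretize the ergodic decomposition of $\mu$ into finitely many ergodic components whose entropies average to at least $h_\mu-\delta$, extract Katok-type $(n_k,\epsilon_k)$-separated families of generic points for each component, concatenate blocks in proportions matching the decomposition weights via the specification property, and convert the counting bound into $h_{\rm top}(K)\ge h_\mu-\delta$ for a Moran-type set $K\subset G_\mu$ by an entropy (mass) distribution principle --- is precisely the strategy of \cite{FLP} (and of Pfister--Sullivan). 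You also correctly identify the two genuine difficulties: handling non-ergodic $\mu$ (where generic separated sets for $\mu$ itself do not exist directly and must be manufactured from ergodic pieces) and making the gap/transition bookkeeping negligible so that empirical measures converge at all times, not just at block ends. Be aware, though, that what you have is a blueprint rather than a proof: the degradation of separation constants under $\epsilon_k$-shadowing (one needs blocks separated at a scale strictly larger than twice the accumulated shadowing errors), the measurable selection of finitely many ergodic representatives with controlled entropies, and the Bowen-ball mass estimate each take substantial work to carry out; in \cite{FLP} this occupies several pages. No step of your outline is wrong, but the lemma's content lives in those estimates.
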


A dynamical system $(X,T)$ is said to satisfy the {\em
specification property} if for any $\epsilon >0$ there exists an integer
$m(\epsilon)\ge 1$ having the property that  for any integer $k\ge2$,
for any $k$ points $x_1,\ldots,x_k$ in $X$, and for any integers
$$
  a_1 \le b_1 < a_2 \le b_2 < \cdots < a_k \le b_k
$$
with $
    a_i - b_{i-1} \ge m(\epsilon) \quad (\forall 2 \le i \le k),
$ there exists a point $y\in X$ such that
$$
   d(T^{a_i+n} y,T^n x_i) < \epsilon
\qquad (\forall
   \ 0 \le n \le b_i-a_i,  \quad \forall 1 \le i \le k).
$$

The specification
property implies the topological mixing. Blokh (\cite{Blokh})
proved that these two properties are equivalent for continuous
interval transformations.
Mixing subshifts of finite type satisfy the specification
property. In general, a subshift satisfies the specification if
for any admissible words $u$ and $v$ there exists a word $w$ with
$|w|\le k$ (some constant $k$) such that $uwv$ is admissible. For
$\beta$-shifts defined by $T_\beta x = \beta x (\!\!\mod 1)$,
there is only a countable number of $\beta$'s such that the
$\beta$ shifts admit Markov partition (i.e. subshifts of finite
type), but an uncountable number of $\beta$'s such that the
$\beta$-shifts satisfy the specification property
(\cite{Schmel97}).

We finish this section by mentioning that continuous functions on $X^r$
can be uniformly approximated by tensor functions.  It is
a consequence of the Stone-Weierstrass theorem.

\begin{lemma}\label{SW}
    Let $F \in C(X^r)$. For any $\epsilon>0$, there exists a function of the form
    $$
        \widetilde{F}(x_1, \cdots, x_r) = \sum_{j=1}^n f_j^{(1)}(x_1)f_j^{(2)}(x_2) \cdots f_j^{(r)}(x_r)
    $$
    where $f_j^{(i)} \in C(X)$, such that $\|F-\widetilde{F}\|_\infty<\epsilon$.
\end{lemma}

We will write
$$
        \widetilde{F} = \sum_{j=1}^n f_j^{(1)}\otimes f_j^{(2)}\otimes \cdots \otimes f_j^{(r)}.
    $$

\section{Proof of Theorem~\ref{VP}}\label{proof}

We can actually consider Banach-valued $V$-statistics.
More than Theorem~\ref{VP} can be proved.

Let $\mathbb{B}$ be a real Banach space and
$\mathbb{B}^*$ its dual space. The duality will be denoted by
$\langle y, x\rangle$ ($x \in \mathbb{B}, y\in \mathbb{B}^*$). We consider $\mathbb{B}^*$ as a
locally convex topological space with the weak star topology
$\sigma(\mathbb{B}^*, \mathbb{B})$. For any $\mathbb{B}^*$-valued
continuous function $\Phi: X \to \mathbb{B}^*$, we consider its
$V$-statistics $V_\Phi(n, x)$ as before, formally in the same way.

 Fix a subset
$W \subset \mathbb{B}$. For a sequence $\{\xi_n\} \subset
\mathbb{B}^*$ and a point $\xi \in \mathbb{B}^*$, we denote by
$\limsup_{n \to \infty} \xi_n \stackrel{W}{\le} \xi$ the fact
$$
\limsup_{n \to \infty} \langle \xi_n , w\rangle \le \langle  \xi,
w\rangle\ {\rm \ for \ all\ } w \in W.
$$
 It is clear that
$\limsup_{n \to \infty} \xi_n \stackrel{\mathbb{B}}{\le} \xi$
 means $\xi_n$ converges to $\xi$ in the weak star topology
$\sigma(\mathbb{B}^*, \mathbb{B})$.

Given $\alpha \in \mathbb{B}^*$ and $W \subset \mathbb{B}$. We define
$$
E_\Phi(\alpha, W) = \left\{ x \in X:  \limsup_{n \to \infty}
V_\Phi(n, x) \stackrel{W}{\le}
  \alpha
\right\}
$$
$$
       \mathcal{M}_\Phi(\alpha, W) = \left\{\mu\in \mathcal{M}_{\rm
       inv}: \int \Phi d\mu \stackrel{W}{\le} \alpha \right\}
$$
where $\int \Phi d \mu$ denotes the vector-valued integral in
Pettis' sense (see \cite{Rudin}) and the inequality
``$\stackrel{W}{\le}$" means
$
             \int \langle \Phi, w \rangle d\mu \le
             \langle \alpha, w \rangle
             \ \ \mbox{\rm for \ all }\ w \in W.
$

\begin{theorem}\label{VP2}
Suppose that the dynamical system $(X, T)$ is saturated.  If  $
\mathcal{M}_\Phi(\alpha, W)= \emptyset$, we have $E_\Phi(\alpha, W)=\emptyset$. If  $ \mathcal{M}_\Phi(\alpha, W)\not=
\emptyset$, we have
\begin{equation}\label{variational-principle}
         h_{\rm top} (E_\Phi(\alpha, W)) =
        \sup_{\mu \in \mathcal{M}_\Phi(\alpha, W)} h_\mu.
\end{equation}
\end{theorem}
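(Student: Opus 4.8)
The plan is to push every statement about the $\mathbb{B}^*$-valued V-statistic down to scalar Birkhoff sums through the duality pairing, so that the whole problem reduces to the behaviour of the empirical measures $\nu_n(x) := \frac1n\sum_{j=0}^{n-1}\delta_{T^jx}$ and their weak-star limit set $V(x)$. The key elementary observation is that for every fixed $w\in\mathbb{B}$ the function $g_w := \langle \Phi(\cdot), w\rangle$ is continuous on $X$, because $\Phi$ is continuous for the weak-star topology $\sigma(\mathbb{B}^*,\mathbb{B})$; moreover $\langle V_\Phi(n,x), w\rangle = \frac1n\sum_{i=1}^n g_w(T^ix)$, which differs from $\int g_w\, d\nu_n(x)$ only by one boundary term and hence has the same asymptotics. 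Thus $\limsup_n\langle V_\Phi(n,x),w\rangle$ and the quantities $\langle\int\Phi\,d\mu,w\rangle = \int g_w\,d\mu$ are both governed by weak-star limits of $\nu_n(x)$. (One first notes, via the uniform boundedness principle, that $\Phi(X)$ is norm-bounded, so the Pettis integral $\int\Phi\,d\mu$ exists.)

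With this reduction in hand, I would first dispose of the emptiness case and, simultaneously, obtain the upper bound. Fix $x\in E_\Phi(\alpha,W)$ and any $\mu\in V(x)$, say $\nu_{n_k}(x)\to\mu$. For each $w\in W$ one has $\langle\int\Phi\,d\mu,w\rangle = \lim_k \langle V_\Phi(n_k,x),w\rangle \le \limsup_n\langle V_\Phi(n,x),w\rangle \le \langle\alpha,w\rangle$, so $\int\Phi\,d\mu\stackrel{W}{\le}\alpha$, i.e. $\mu\in\mathcal{M}_\Phi(\alpha,W)$. Consequently, if $\mathcal{M}_\Phi(\alpha,W)=\emptyset$ then $E_\Phi(\alpha,W)$ must be empty as well, since $V(x)\neq\emptyset$ for every $x$. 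For the upper bound set $s := \sup_{\mu\in\mathcal{M}_\Phi(\alpha,W)} h_\mu$. The computation just made shows that every $\mu\in V(x)$ satisfies $h_\mu\le s$, so $x$ lies in Bowen's set $B^{(s)}$; hence $E_\Phi(\alpha,W)\subset B^{(s)}$ and Lemma~\ref{Bowen} yields $h_{\rm top}(E_\Phi(\alpha,W))\le s$.

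For the matching lower bound I would invoke saturation. Fix $\mu\in\mathcal{M}_\Phi(\alpha,W)$ and take any generic point $x\in G_\mu$, so that $\nu_n(x)\to\mu$. Then for every $w\in W$ there is genuine convergence $\langle V_\Phi(n,x),w\rangle \to \int g_w\,d\mu = \langle\int\Phi\,d\mu,w\rangle \le \langle\alpha,w\rangle$, whence $x\in E_\Phi(\alpha,W)$. This proves $G_\mu\subset E_\Phi(\alpha,W)$, and by monotonicity of topological entropy together with saturation (Lemma~\ref{FLP}), $h_{\rm top}(E_\Phi(\alpha,W))\ge h_{\rm top}(G_\mu)=h_\mu$. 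Taking the supremum over $\mu\in\mathcal{M}_\Phi(\alpha,W)$ and combining with the previous paragraph gives the variational principle.

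The steps themselves are short, so the main obstacle is bookkeeping rather than a single hard estimate, and it lies entirely in the reduction of the first paragraph. The delicate point is that the condition defining $E_\Phi(\alpha,W)$ is only a one-sided $\limsup\stackrel{W}{\le}\alpha$ tested against $W$, not full weak-star convergence, and I must check that this one-sidedness is consistently matched on the measure side by the identical $\stackrel{W}{\le}$ in $\mathcal{M}_\Phi(\alpha,W)$ — which is precisely why the upper-bound argument needs only $h_\mu\le s$ for limit measures, while the lower-bound argument, working with genuinely generic points, produces the stronger two-sided convergence for free. A secondary matter to handle carefully is the passage to subsequences and the exchange of $\lim_k$ with the pairing $\langle\cdot,w\rangle$, which is legitimate exactly because each $g_w$ is continuous.
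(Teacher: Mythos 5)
Your skeleton is the same as the paper's: show that every weak-star limit $\mu\in V(x)$ of empirical measures of a point $x\in E_\Phi(\alpha,W)$ lies in $\mathcal{M}_\Phi(\alpha,W)$ (which gives both the emptiness assertion and, through Bowen's Lemma~\ref{Bowen}, the upper bound), then show $G_\mu\subset E_\Phi(\alpha,W)$ for each $\mu\in\mathcal{M}_\Phi(\alpha,W)$ and invoke saturation (Lemma~\ref{FLP}) for the lower bound. But there is a genuine gap, and it sits exactly in the reduction that you yourself say carries all the weight. The kernel in Theorem~\ref{VP2} is a function of $r$ variables: despite the paper's abbreviated notation $\Phi\colon X\to\mathbb{B}^*$, the V-statistic is formed ``as before'', i.e. $V_\Phi(n,x)=n^{-r}\sum_{1\le i_1,\dots,i_r\le n}\Phi(T^{i_1}x,\dots,T^{i_r}x)$, and the fiber condition means $\int\langle\Phi,w\rangle\,d\mu^{\otimes r}\le\langle\alpha,w\rangle$; otherwise Theorem~\ref{VP2} would not contain Theorem~\ref{VP}, which is its whole purpose. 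Hence $g_w=\langle\Phi,w\rangle$ is a continuous function on $X^r$, not on $X$, and your identity $\langle V_\Phi(n,x),w\rangle=\frac1n\sum_{i=1}^n g_w(T^ix)$ is false for $r\ge2$. The correct identity is
\[
\langle V_\Phi(n,x),w\rangle=\int_{X^r}g_w\,d\bigl(\nu_n(x)\bigr)^{\otimes r},
\]
a degree-$r$ polynomial in the empirical measure rather than a linear functional of it.

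Because of this, the two limit passages your argument relies on --- $\lim_k\langle V_\Phi(n_k,x),w\rangle=\int g_w\,d\mu^{\otimes r}$ when $\nu_{n_k}(x)\to\mu$, and its analogue at generic points --- are precisely what still needs to be proved: one must show that $\nu_{n_k}\to\mu$ weak-star implies $\nu_{n_k}^{\otimes r}\to\mu^{\otimes r}$ weak-star. This is true, but it is not bookkeeping; it is the entire technical content separating $r\ge2$ from the classical case $r=1$ (where your argument is fine and the result was already known). The paper supplies exactly this ingredient via Lemma~\ref{SW}: approximate $g_w$ uniformly by a finite sum of tensor products $\sum_j f_j^{(1)}\otimes\cdots\otimes f_j^{(r)}$, observe that for such kernels the V-statistic equals $\sum_j\prod_{i=1}^r S_nf_j^{(i)}(x)/n$, a polynomial in ordinary Birkhoff averages which does converge along $(n_k)$, and then control the approximation and oscillation errors (the decomposition $\sigma_1+\sigma_2+\sigma_3+\sigma_4$ in the paper). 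If you insert this tensor-approximation step (or an explicit proof of the product-convergence fact) at both places where you exchange the limit with the pairing, your proof becomes correct and coincides with the paper's.
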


\begin{proof}
We prove the first assertion by showing  that
$E_\Phi(\alpha, W)\not=\emptyset$  implies $\mathcal{M}_\Phi(\alpha, W)\not= \emptyset$.
Let $x\in E_\Phi(\alpha, W)$. There exists a measure $\mu \in V(x)\subset \mathcal{M}_{\rm inv}$ and a sequence of integers $(n_k)$ such that
\begin{equation}\label{limit_mu}
  \mu = w^*\!-\!\lim_{k \to \infty} \frac{1}{n_k}\sum_{j=1}^{n_k} \delta_{T^j x}.
\end{equation}
We are going to show that $\mu \in \mathcal{M}_\Phi(\alpha, W)$.

Let $w\in W$. Then $\langle \Phi, w \rangle$ is a continuous function on $X$.
For an arbitrarily small $\epsilon >0$, by the Stone-Weierstrass  theorem (See Lemma\ref{SW}) there exists a function $\widetilde{\Phi}$ of the form
$$
   \widetilde{\Phi}= \sum_j f_j^{(1)} \otimes f_j^{(2)} \otimes\cdots \otimes f_j^{(r)}
$$
(finite sum of tensor products) such that $$
\|\langle \Phi, w \rangle  - \widetilde{\Phi}\|_\infty\le \epsilon.
$$ Notice that
$$
  V_{\widetilde{\Phi}}(n, x) = \sum_j \prod_{i=1}^r \frac{S_n f_j^{(i)}(x)}{n}
$$
where
$$S_n f(x)= \sum_{k=1}^n f(T^k x)$$
 denotes the ergodic sum  for a given function $f$. According to
(\ref{limit_mu}), we have
\begin{equation}\label{limit_Tphi}
  \lim_{k \to \infty} V_{\widetilde{\Phi}}(n_k, x)=
  \sum_j \prod_{i=1}^r \int_X f_j^{(i)}d \mu = \int_{X^r} \widetilde{\Phi} d\mu^{\otimes d}.
\end{equation}
On the other hand, we  write $$
\int \langle \Phi, w\rangle d \mu^{\otimes d} - \langle \alpha, w\rangle
= \sigma_1+ \sigma_2 + \sigma_3 + \sigma_4
$$
where
\begin{eqnarray*}
 \sigma_1 & = & \int (\langle \Phi, w\rangle-\widetilde{\Phi}) d\mu^{\otimes d} \\
 \sigma_2 & =&  \int \widetilde{\Phi} d\mu^{\otimes d}- V_{\widetilde{\Phi}}(n_k, x)\\
 \sigma_3 & = & V_{ \widetilde{\Phi}}(n_k, x)- V_{\langle\Phi, w\rangle}(n_k, x)\\
 \sigma_4 &=&  V_{\langle\Phi, w\rangle} (n_k, x)-\langle  \alpha, w\rangle.
\end{eqnarray*}
We have
   $$
      |\sigma_1| \le \epsilon, \quad |\sigma_3| \le \epsilon, \quad
      \lim \sigma_2=0, \quad \limsup \sigma_4 \le 0.
   $$
So, we get
 $$
 \int \langle \Phi, w\rangle d \mu^{\otimes d} \le \langle \alpha, w\rangle+2 \epsilon.
 $$
Since
 $\epsilon$ is arbitrary,
we have thus proved that $\mu \in \mathcal{M}_{\rm inv}(\alpha, W)$.
The first assertion is then proved.

Prove now the second assertion.
What we have just proved also implies
$$
   E_\Phi(\alpha, W) \subset B^{(t)}=\{x \in X: \exists \mu \in V(x) \ {\rm such \ that} \ h_\mu \le t\}
$$
where $t = \sup_{\mu \in \mathcal{M}_\Phi(\alpha, W)} h_\mu$. By the Bowen lemma (Lemma~\ref{Bowen}), we get
\begin{equation}\label{upperbound}
  h_{\rm top}(E_\Phi(\alpha))
  \le
  \sup_{\mu \in \mathcal{M}_\Phi(\alpha)} h_\mu.
\end{equation}
To finish the proof of the second assertion, it suffices to prove the reverse inequality of (\ref{upperbound}).
Let $\mu \in \mathcal{M}_\Phi(\alpha)$. Let $x\in G_\mu$.  For any $\epsilon >0$ and any $w\in W$,
consider $\widetilde{\Phi}$ as above. We have
$$
    \lim_{n \to \infty} V_{\widetilde{\Phi}}(n, x) = \int_{X^r} \widetilde{\Phi} d \mu^{\otimes r}.
$$
It follows that
\begin{eqnarray*}
    \limsup_{n \to \infty} V_{\langle \Phi, w\rangle }(n, x)
    & \le &  \lim_{n \to \infty} V_{\widetilde{\Phi}}(n, x) +\epsilon\\
    & = &  \int_{X^r} \widetilde{\Phi} d \mu^{\otimes r} +\epsilon \\
     & \le  & \int_{X^r} \langle \Phi, w\rangle  d \mu^{\otimes r} +2\epsilon\le \langle \alpha, w\rangle
     +2\epsilon.
\end{eqnarray*}
Letting $\epsilon \to 0$ we get
$$
      \limsup_{n \to \infty} \langle V_{{\Phi}}(n, x), w\rangle \le  \langle \alpha, w\rangle.
$$
In other words, we have proved $G_\mu \subset E_\Phi(\alpha, W)$ for all $\mu \in \mathcal{M}_{\rm inv}(\alpha, W)$.
 So,
$$h_{\rm top}(E_\Phi(\alpha))\ge h_{\rm top} (G_\mu).$$
By Lemma~\ref{FLP}, $h_{\rm top} (G_\mu)=h_\mu$.
Taking the supremum over $\mu \in \mathcal{M}_{\rm inv}(\alpha, W)$ leads to the reverse inequality of (\ref{upperbound}).

\end{proof}

\section{Example: Shift dynamics}\label{examples}

Let $(X, T) = (\Sigma_m, \sigma)$ with $m\ge 2$, where
 $\sigma\colon\Sigma_m\to \Sigma_m$ is the shift on the space $\Sigma_m=\{0,1, \cdots, m-1\}^\N$.

 Let
 $$L_{(\Phi, W)}=\{ \alpha \in \mathbb{B}^*: E_\Phi(\alpha, W)\not= \emptyset\}.
 $$
 If $W=\mathbb{B}$, we write $L_{\Phi}=L_{(\Phi, W)}$. Define $f_{(\Phi, W)}: L_{(\Phi, W)}\to \mathbb{R}$ by
 $$
     f_{(\Phi, W)}(\alpha)= h_{\rm top} (E_\Phi(\alpha, W)).
 $$

\begin{theorem}\label{USC}
$f_{(\Phi, W)}: L_{(\Phi, W)}\to \mathbb{R}$ is upper semi-continuous.

\end{theorem}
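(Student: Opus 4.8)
The plan is to combine the variational principle of Theorem~\ref{VP2} with two soft facts about the shift: the weak$^*$-compactness of $\mathcal{M}_{\rm inv}$ together with upper semi-continuity of the entropy map $\mu\mapsto h_\mu$ (valid because $(\Sigma_m,\sigma)$ is expansive), and the weak$^*$-continuity of $\mu\mapsto\int_{X^r}\langle\Phi,w\rangle\,d\mu^{\otimes r}$ for each fixed $w$. By Theorem~\ref{VP2}, for $\alpha\in L_{(\Phi,W)}$ we have $f_{(\Phi,W)}(\alpha)=\sup_{\mu\in\mathcal{M}_\Phi(\alpha,W)}h_\mu$, so it suffices to show that this supremum is an upper semi-continuous function of $\alpha$, where $\mathbb{B}^*$ carries the weak$^*$ topology $\sigma(\mathbb{B}^*,\mathbb{B})$. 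I would establish upper semi-continuity through its standard reformulation: for every $c\in\mathbb{R}$ the superlevel set $A_c=\{\alpha\in L_{(\Phi,W)}:f_{(\Phi,W)}(\alpha)\ge c\}$ is closed.

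First I would record the two continuity/compactness inputs. Using Lemma~\ref{SW} one checks that $\mu\mapsto\mu^{\otimes r}$ is weak$^*$-continuous: tensor functions are dense, on a tensor product $\int f_1\otimes\cdots\otimes f_r\,d\mu^{\otimes r}=\prod_i\int f_i\,d\mu$ is visibly continuous, and a uniform approximation then handles a general continuous integrand. Consequently $\mu\mapsto\int_{X^r}\langle\Phi,w\rangle\,d\mu^{\otimes r}$ is weak$^*$-continuous for each $w$, so each constraint $\int\langle\Phi,w\rangle\,d\mu^{\otimes r}\le\langle\alpha,w\rangle$ cuts out a weak$^*$-closed set of measures; hence $\mathcal{M}_\Phi(\alpha,W)$, being an intersection of such sets, is weak$^*$-closed and therefore compact. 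Since the entropy map is upper semi-continuous on this compact set, the supremum defining $f_{(\Phi,W)}(\alpha)$ is attained.

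To prove $A_c$ closed, take a net $\alpha_\lambda\in A_c$ with $\alpha_\lambda\to\alpha_0$ in $\sigma(\mathbb{B}^*,\mathbb{B})$. For each $\lambda$, using the attainment just noted, choose $\mu_\lambda\in\mathcal{M}_\Phi(\alpha_\lambda,W)$ with $h_{\mu_\lambda}\ge c$. By weak$^*$-compactness of $\mathcal{M}_{\rm inv}$ I would pass to a subnet with $\mu_\lambda\to\mu_0\in\mathcal{M}_{\rm inv}$. For fixed $w\in W$, passing to the limit in $\int\langle\Phi,w\rangle\,d\mu_\lambda^{\otimes r}\le\langle\alpha_\lambda,w\rangle$, using weak$^*$-continuity of the left side together with $\langle\alpha_\lambda,w\rangle\to\langle\alpha_0,w\rangle$, yields $\int\langle\Phi,w\rangle\,d\mu_0^{\otimes r}\le\langle\alpha_0,w\rangle$; as $w$ was arbitrary, $\mu_0\in\mathcal{M}_\Phi(\alpha_0,W)$. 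Upper semi-continuity of the entropy map gives $h_{\mu_0}\ge\limsup_\lambda h_{\mu_\lambda}\ge c$, so $\mathcal{M}_\Phi(\alpha_0,W)\ne\emptyset$ and $f_{(\Phi,W)}(\alpha_0)\ge h_{\mu_0}\ge c$, i.e. $\alpha_0\in A_c$. This closes $A_c$ and proves the theorem.

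The individual steps are routine; the point requiring care is that $\sigma(\mathbb{B}^*,\mathbb{B})$ need not be metrizable, so I would argue with nets and subnets rather than sequences throughout. The one genuinely structural ingredient is the upper semi-continuity of $\mu\mapsto h_\mu$: this is what restricts the clean statement to expansive systems such as the full shift, and it is the place where the \emph{Example} hypothesis is really used.
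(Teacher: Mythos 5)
Your proof is correct and follows essentially the same route as the paper's: compactness of the fibers $\mathcal{M}_\Phi(\alpha,W)$, attainment of the supremum by maximizing measures, passage to a weak$^*$ limit which lands in the limiting fiber, and upper semi-continuity of the entropy map for the shift. The only differences are points of care the paper glosses over: you argue with nets and closed superlevel sets (since $\sigma(\mathbb{B}^*,\mathbb{B})$ need not be metrizable, whereas the paper uses sequences $\alpha_n\to\alpha$ directly), and you make explicit, via Lemma~\ref{SW}, the weak$^*$-continuity of $\mu\mapsto\int\langle\Phi,w\rangle\,d\mu^{\otimes r}$ that underlies both the compactness of the fibers and the passage to the limit in the constraints.
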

\begin{proof}
    Let $\alpha_n, \alpha \in L_{(\Phi, W)}$. Suppose $\alpha_n \to \alpha$
   in the weak star topology.
   We have to show that
   $$
      \limsup_n f_{(\Phi, W)}(\alpha_n)\le f_{(\Phi, W)}(\alpha).
   $$
   Since each fiber like $\mathcal{M}_{\rm inv}(\alpha, W)$ is compact, there are maximizing measures
   $\mu_{\alpha_n}\in \mathcal{M}_{\rm inv}(\alpha_n, W)$ and $\mu_\alpha\in \mathcal{M}_{\rm inv}(\alpha, W)$ such that
   \begin{equation}\label{USC1}
         f_{(\Phi, W)}(\alpha_n)= h_{\alpha_n}, \qquad f_{(\Phi, W)}(\alpha)= h_{\alpha}.
  \end{equation}
   Without loss of generality, we can assume that $\mu_{\alpha_n}$  converge weakly, say to $\mu^*$.
   Since
   $$
      \forall w\in W,    \quad \int \langle\Phi, w \rangle d \mu_n \le \langle \alpha_n, w \rangle,
   $$
   taking limit shows that $\mu^* \in \mathcal{M}_{\rm inv}(\alpha, W)$. It follows that
     \begin{equation}\label{USC2}
     h_{\mu^*}\le h_{\mu_\alpha}.
       \end{equation}
   On the other hand, recall that for the shift dynamics, the entropy function $\mu \mapsto h_\mu$
   is upper semi-continuous. So,
   \begin{equation}\label{USC3}
       \limsup_n h_{\alpha_n} \le h_{\mu^*}.
     \end{equation}
  We combine (\ref{USC1}),(\ref{USC2}) and (\ref{USC3}) to finish the proof.
\end{proof}

\begin{theorem}\label{USC} Assume that $\Phi$ is a  function defined on
$\Sigma_m^r$ ($r\ge 1$) which depends only on the first $k$ coordinates of each of its variables
($k\ge 1$). Then the suppremum in the variational principle (\ref{variational-principle}) is attained by a
$(k-1)$-Markov measure.
\end{theorem}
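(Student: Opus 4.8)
The plan is to exploit the hypothesis that $\Phi$ depends only on the first $k$ coordinates of each of its $r$ arguments, so that the integral $\int_{\Sigma_m^r}\Phi\,d\mu^{\otimes r}$ is a function of the $k$-block distribution of $\mu$ alone. Writing $p_\mu(w)=\mu([w])$ for a word $w$ of length $k$, one has
$$
\int_{\Sigma_m^r}\Phi\,d\mu^{\otimes r}=\sum_{w_1,\dots,w_r}\Phi(w_1,\dots,w_r)\,p_\mu(w_1)\cdots p_\mu(w_r),
$$
where the sum runs over $r$-tuples of length-$k$ words and $\Phi(w_1,\dots,w_r)$ denotes the constant value of $\Phi$ on the corresponding product cylinder. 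Hence the constraint defining $\mathcal{M}_\Phi(\alpha)$ sees $\mu$ only through $(p_\mu(w))_{|w|=k}$, and this is the pivot of the whole argument.

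First I would, given an arbitrary $\mu\in\mathcal{M}_\Phi(\alpha)$, associate to it the $(k-1)$-Markov measure $\nu$ with the same $k$-block distribution. Concretely, take as the stationary law on length-$(k-1)$ words the marginal $\mu([w_1\cdots w_{k-1}])$ and as transition probabilities $p_\mu(w_1\cdots w_k)/\mu([w_1\cdots w_{k-1}])$. The shift-invariance of $\mu$ forces the marginal consistency $\sum_a p_\mu(aw_1\cdots w_{k-1})=\sum_a p_\mu(w_1\cdots w_{k-1}a)$, which is exactly what is needed for these data to define a stationary $(k-1)$-step Markov measure, and by construction $\nu$ has the same length-$k$ cylinder measures as $\mu$. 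By the displayed formula, $\int\Phi\,d\nu^{\otimes r}=\int\Phi\,d\mu^{\otimes r}=\alpha$, so $\nu\in\mathcal{M}_\Phi(\alpha)$.

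The core step is the entropy comparison $h_\nu\ge h_\mu$, and this is the part I expect to require the most care. Here I would use the one-sided conditional-entropy formula $h_\mu=\inf_n H_\mu(X_n\mid X_0,\dots,X_{n-1})$, together with the fact that $a_n:=H_\mu(X_n\mid X_0,\dots,X_{n-1})$ is non-increasing in $n$ (stationarity plus ``conditioning decreases entropy''). This gives $h_\mu\le a_{k-1}=H_\mu(X_{k-1}\mid X_0,\dots,X_{k-2})$. On the other hand, for a $(k-1)$-step Markov measure the conditional entropies stabilise, so $h_\nu=H_\nu(X_{k-1}\mid X_0,\dots,X_{k-2})$; and since this quantity depends only on the joint law of $(X_0,\dots,X_{k-1})$, that is, on the $k$-block distribution, we get $H_\nu(X_{k-1}\mid X_0,\dots,X_{k-2})=H_\mu(X_{k-1}\mid X_0,\dots,X_{k-2})$. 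Combining, $h_\nu=H_\mu(X_{k-1}\mid X_0,\dots,X_{k-2})\ge h_\mu$. This is the classical statement that among invariant measures with a prescribed $k$-block distribution the $(k-1)$-step Markov measure maximises entropy; verifying the monotonicity and the stabilisation cleanly is the delicate point.

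Finally I would conclude by an attainment argument. Since the entropy function is upper semicontinuous on the shift and $\mathcal{M}_\Phi(\alpha)$ is weak-$*$ compact (the map $\mu\mapsto\int\Phi\,d\mu^{\otimes r}$ being continuous), the supremum in (\ref{variational-principle}) is attained by some $\mu^*\in\mathcal{M}_\Phi(\alpha)$. Applying the construction above to $\mu^*$ yields a $(k-1)$-Markov measure $\nu^*\in\mathcal{M}_\Phi(\alpha)$ with $h_{\nu^*}\ge h_{\mu^*}$; as $\mu^*$ is maximal, $h_{\nu^*}=h_{\mu^*}=\sup_{\mu\in\mathcal{M}_\Phi(\alpha)}h_\mu$, so $\nu^*$ attains the supremum, which is the assertion. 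Alternatively, one may bypass the upper semicontinuity of the entropy on $\mathcal{M}_\Phi(\alpha)$ and observe directly that the admissible $k$-block distributions form a compact set on which $\nu\mapsto h_\nu$ is continuous, so the reduced supremum over $(k-1)$-Markov measures is attained there.
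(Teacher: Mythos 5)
Your proposal is correct and follows essentially the same route as the paper, which in two lines asserts exactly the facts you prove in detail: the integral $\int\Phi\,d\mu^{\otimes r}$ depends only on the $k$-block distribution of $\mu$, and the $(k-1)$-Markov measure with the same $k$-block distribution satisfies $h_\nu\ge h_\mu$. Your additional explicit attainment step (compactness of the fiber plus upper semicontinuity of entropy) is left implicit in the paper, where it is supplied by the proof of the preceding theorem on upper semicontinuity of the spectrum.
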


\begin{proof} This is just because
the integral $\int \Phi d\mu^{\otimes r}$ depends only on the values
$\mu([a_1, \cdots,a_k])$ of the measure $\mu$ on cylinders $[a_1, \cdots, a_k]$
and there exists a $(k-1)$-Markov measure $\nu$ such that
$$
    \mu([a_1, \cdots,a_k]) = \nu([a_1, \cdots,a_k])
$$
for all cylinders $[a_1, \cdots, a_k]$ and such that $h_\nu \ge h_\mu$.
\end{proof}

In particular, if $k=1$, maximizing measures are Bernoulli measures.
For the Bernoulli measure $\mu_p$ determined by a probability vector $p=(p_0, \cdots, p_{m-1})$,
we have $h_{\mu_p}=H_1(p)$ where
$$
      H_1(p) = -\sum_{j=0}^{m-1} p_j \log p_j.
$$
Suppose that the function $\Phi$ is a product of $r$ functions and each of its factor depends only on the first coordinate, i.e.
 $$
 \Phi(x^{(1)}, \cdots, x^{(r)})
= \phi_1(x^{(1)}_1)\cdots \phi_r(x^{(r)}_1).
$$
Let $$
    A(p)= \int_{\Sigma_m^r} \Phi(x^{(1)}, \cdots, x^{(r)})  d\mu_p(x^{(1)})\cdots d\mu_p(x^{(r)}).
$$
 Notice that $E_\Phi(\alpha)\not=\emptyset$ iff $\alpha=A(p)$ for some  probability vector $p=(p_0, \cdots, p_{m-1})$.
 The following result is a direct consequence of the last theorem.

 \begin{theorem}\label{k=1} Let $\Phi(x^{(1)}, \cdots, x^{(r)})
= \phi_1(x^{(1)}_1)\cdots \phi_r(x^{(r)}_1)$. We have
$$
      A(p) = \prod_{k=1}^r \sum_{j=0}^{m-1} \phi_k(j) p_j.
$$
For any $\alpha$ satisfying $E_\Phi(\alpha)\not=\emptyset$, we have
 \begin{equation}
      h_{\rm top}(E_\Phi(\alpha)) = \max_{A(p)=\alpha}  H_1(p)
  \end{equation}
  where the maximum is taken over all probability vectors $p$ satisfying $A(p)=\alpha$.
\end{theorem}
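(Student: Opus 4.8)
The statement is a corollary of the variational principle of Theorem~\ref{VP} together with the reduction to Bernoulli measures recorded in the preceding theorem (its case $k=1$). The plan is to compute $A(p)$ directly, observe that the fiber constraint sees a measure only through its one-dimensional marginal, invoke the Bernoulli reduction to restrict the supremum, and finally pass from a supremum to a maximum by a compactness argument.

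First I would compute $A(p)$. Because $\Phi = \phi_1 \otimes \cdots \otimes \phi_r$ with each $\phi_k$ depending only on the first coordinate, integration against the product measure $\mu_p^{\otimes r}$ factorises into a product of one-dimensional integrals; since $\mu_p([j]) = p_j$, each factor equals $\sum_{j=0}^{m-1} \phi_k(j) p_j$, which yields the displayed formula $A(p) = \prod_{k=1}^r \sum_{j=0}^{m-1} \phi_k(j) p_j$.

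Next comes the key point: for an arbitrary $\mu \in \mathcal{M}_{\rm inv}$ the same factorisation gives $\int \Phi \, d\mu^{\otimes r} = \prod_{k=1}^r \sum_{j=0}^{m-1} \phi_k(j)\, \mu([j])$, so this integral depends on $\mu$ only through its first-coordinate marginal $p = (\mu([0]), \ldots, \mu([m-1]))$ and in fact equals $A(p)$. Hence the condition $\mu \in \mathcal{M}_\Phi(\alpha)$ forces $A(p) = \alpha$. By the case $k=1$ of the preceding theorem there is a Bernoulli measure $\mu_p$ sharing this marginal, so that $\int \Phi \, d\mu_p^{\otimes r} = A(p) = \alpha$ while $h_{\mu_p} = H_1(p) \ge h_\mu$. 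Therefore in the variational principle of Theorem~\ref{VP} the supremum over $\mathcal{M}_\Phi(\alpha)$ may be restricted to Bernoulli measures, giving $h_{\rm top}(E_\Phi(\alpha)) = \sup\{H_1(p) : A(p) = \alpha\}$.

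Finally I would upgrade the supremum to a maximum: $A$ is a polynomial in $p$, hence continuous on the probability simplex, so the set $\{p : A(p) = \alpha\}$ is closed and therefore compact (and nonempty precisely when $E_\Phi(\alpha) \neq \emptyset$), and the continuous entropy function $H_1$ attains its maximum there. This produces the claimed equality. I do not expect a genuine obstacle, since every ingredient has already been established; the only step deserving care is the passage from general invariant measures to Bernoulli measures, which rests on the twin facts that $\int \Phi \, d\mu^{\otimes r}$ sees $\mu$ only through its one-dimensional marginal and that, by the preceding theorem, the entropy can only increase when $\mu$ is replaced by the Bernoulli measure with that marginal.
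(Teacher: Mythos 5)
Your proposal is correct and follows essentially the same route as the paper, which derives this result as a direct consequence of the preceding theorem (the $(k-1)$-Markov reduction, here with $k=1$ giving Bernoulli measures) combined with the variational principle of Theorem~\ref{VP}. The details you supply --- the factorisation of $\int \Phi\, d\mu^{\otimes r}$ through the first-coordinate marginal, the entropy comparison $H_1(p)\ge h_\mu$, and the compactness argument upgrading the supremum to a maximum --- are exactly what the paper leaves implicit.
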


If $k=2$, maximizing measures are Markov measures. A Markov measure $\mu_{p, P}$ is determined by
a probability vector $p$ and a transition matrix $P$.
Its entropy is equal to
$$
      H_2(p, P) = -\sum_{i=0}^{m-1} p_i \sum_{j=0}^{m-1} p_{i,j} \log p_{i,j}.
$$
Suppose $\Phi(x^{(1)}, \cdots, x^{(r)})$ is of the form
 $
 \phi_1(x^{(1)}_1, x^{(1)}_2)\cdots \phi_r(x^{(r)}_1, x^{(r)}_2).
$  
Let $$
    A(p, P)= \int_{\Sigma_m^r} \Phi(x^{(1)}, \cdots, x^{(r)})  d\mu_{p,P}(x^{(1)})\cdots d\mu_{p,P}(x^{(r)}).
$$

\begin{theorem}\label{k=2} Let $\Phi(x^{(1)}, \cdots, x^{(r)})
= \phi_1(x^{(1)}_1, x^{(1)}_2)\cdots \phi_r(x^{(r)}_1, x^{(r)}_2)$. We have
$$
      A(p,P) = \prod_{k=1}^r \sum_{i, j=0}^{m-1} \phi_k(i,j) p_ip_{i,j}.
$$
For any $\alpha$ satisfying $E_\Phi(\alpha)\not=\emptyset$, we have
 \begin{equation}
      h_{\rm top}(E_\Phi(\alpha)) = \max_{A(p, P)=\alpha}  H_2(p, P)
  \end{equation}
  where the maximum is taken over all couples $(p, P)$ satisfying $A(p, P)=\alpha$.
\end{theorem}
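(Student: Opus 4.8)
The plan is to obtain Theorem~\ref{k=2} as a direct consequence of the variational principle (Theorem~\ref{VP}) together with the reduction to $(k-1)$-Markov measures established just above, applied with $k=2$. The argument splits cleanly into an explicit evaluation of $A(p,P)$ and an identification of the supremum in the variational principle with a maximum over Markov parameters.

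For the formula for $A(p,P)$, I would exploit that $\Phi=\phi_1\otimes\cdots\otimes\phi_r$ is a tensor product and that the integration is performed against the product measure $\mu_{p,P}^{\otimes r}$. Since each factor $\phi_k$ depends only on its own variable $x^{(k)}$, the integral factorizes as $A(p,P)=\prod_{k=1}^r \int_{\Sigma_m}\phi_k(x_1,x_2)\,d\mu_{p,P}(x)$. Each single integral is then a finite sum: because $\phi_k$ depends only on the first two coordinates, grouping points according to $(x_1,x_2)=(i,j)$ and using $\mu_{p,P}([i,j])=p_i p_{i,j}$ for a Markov measure yields $\int \phi_k\,d\mu_{p,P}=\sum_{i,j}\phi_k(i,j)\,p_i p_{i,j}$, which gives the stated product expression.

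For the entropy identity, I would first invoke Theorem~\ref{VP}: whenever $\mathcal{M}_\Phi(\alpha)\neq\emptyset$ — equivalently, by the Markov reduction, whenever $\alpha=A(p,P)$ for some couple $(p,P)$, since every Markov measure is invariant — we have $h_{\rm top}(E_\Phi(\alpha))=\sup_{\mu\in\mathcal{M}_\Phi(\alpha)}h_\mu$. As $\Phi$ depends only on the first two coordinates of each variable, the theorem established above asserting that the supremum is attained by a $(k-1)$-Markov measure (with $k=2$) guarantees this supremum is realized by a Markov measure $\mu_{p^*,P^*}$; for it, $\int\Phi\,d\mu_{p^*,P^*}^{\otimes r}=A(p^*,P^*)=\alpha$ and $h_{\mu_{p^*,P^*}}=H_2(p^*,P^*)$. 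Conversely, every Markov measure $\mu_{p,P}$ with $A(p,P)=\alpha$ belongs to $\mathcal{M}_\Phi(\alpha)$, so $H_2(p,P)$ never exceeds the supremum. Combining the two directions replaces the supremum by $\max_{A(p,P)=\alpha}H_2(p,P)$.

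Both parts are essentially computational, so there is no serious obstacle; the only point deserving care — which I would treat as the main, and rather minor, difficulty — is confirming that the supremum is genuinely attained as a maximum over the parameter set $\{(p,P):A(p,P)=\alpha\}$. This is already supplied by the $(k-1)$-Markov attainment result quoted above, which provides a concrete maximizing Markov measure; should one prefer a self-contained argument, it also follows from the compactness of the set of stationary couples $(p,P)$ together with the continuity of $A$ and $H_2$.
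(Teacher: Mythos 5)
Your proposal is correct and takes essentially the same route as the paper: the paper presents Theorem~\ref{k=2} as a direct consequence of the variational principle (Theorem~\ref{VP}) combined with the preceding theorem that the supremum is attained by a $(k-1)$-Markov measure (here $k=2$), which is precisely your argument. Your factorization of $A(p,P)$ via the tensor structure and the two-sided comparison identifying the supremum with the maximum over stationary couples $(p,P)$ fill in the same computations the paper leaves implicit.
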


Let us consider two examples. We will use the following trivial property of the entropy
function $H(x)=-x\log x-(1-x)\log(1-x)$.

\begin{lemma}
Given two numbers $p_1, p_2 \in [0,1]$. We have
$$
H(p_1)<H(p_2)\ \  \mbox{\rm iff} \ \ |p_1-1/2|> |p_2-1/2|.
$$
We have $
H(p_1)= H(p_2)$  iff  $ |p_1-1/2|= |p_2-1/2|$.
\end{lemma}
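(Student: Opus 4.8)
The plan is to exploit the two structural features of the binary entropy function $H(x)=-x\log x-(1-x)\log(1-x)$: its symmetry about $1/2$ and its strict unimodality. First I would record the symmetry $H(x)=H(1-x)$, which is immediate from the definition, since replacing $x$ by $1-x$ merely interchanges the two summands. Consequently $H(x)$ depends on $x$ only through the folded quantity $|x-1/2|$: setting $g(u)=H(\tfrac12+u)$ for $u\in[0,\tfrac12]$, the symmetry gives $H(x)=g(|x-1/2|)$ for every $x\in[0,1]$.

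Next I would establish that $g$ is strictly decreasing on $[0,1/2]$, which is equivalent to saying that $H$ is strictly increasing on $[0,1/2]$ and strictly decreasing on $[1/2,1]$. For this I differentiate on the open interval $(0,1)$: a direct computation gives $H'(x)=\log\frac{1-x}{x}$. Hence $H'(x)>0$ for $x<1/2$ and $H'(x)<0$ for $x>1/2$, so $H$ strictly increases up to its unique maximum at $x=1/2$ and strictly decreases afterwards. Since $H$ is continuous on the closed interval $[0,1]$ (with the usual convention $0\log 0=0$), this strict monotonicity extends to the endpoints, and therefore $g$ is strictly decreasing on all of $[0,1/2]$.

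With these two facts the lemma follows at once. Because $g$ is strictly decreasing, it reverses strict inequalities and preserves equalities: $H(p_1)<H(p_2)$ reads $g(|p_1-1/2|)<g(|p_2-1/2|)$, which holds if and only if $|p_1-1/2|>|p_2-1/2|$; likewise $H(p_1)=H(p_2)$ holds if and only if $|p_1-1/2|=|p_2-1/2|$. This gives both assertions of the lemma simultaneously.

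I expect no genuine obstacle here; the statement is an elementary property of a single symmetric, strictly concave function. The only point requiring a little care is the behaviour at the endpoints $0$ and $1$, where $H$ is defined by the continuity convention rather than by the formula and where $H'(x)$ blows up as $x\to 0^+$ or $x\to 1^-$; at those two points the monotonicity should therefore be deduced from the continuity of $H$ on $[0,1]$ together with strict monotonicity on the open interval, rather than directly from the derivative.
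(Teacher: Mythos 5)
Your proof is correct. The paper actually offers no proof of this lemma at all --- it is introduced as a ``trivial property'' of the entropy function $H$ and used without justification --- and your argument (the symmetry $H(x)=H(1-x)$ combined with strict monotonicity on $[0,1/2]$ and $[1/2,1]$ obtained from $H'(x)=\log\frac{1-x}{x}$, folded into a strictly decreasing function of $|x-1/2|$) is precisely the standard reasoning the authors implicitly rely on, including the correct handling of the endpoints where the derivative formula does not apply.
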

\medskip

{\em Example 1.} Consider the case $m=2$, $k=1$ and $r=2$. Let
$x=p_1$. Then $p_0=1-x$ and we have
$$
     A(p) = [\phi_1(0)(1-x) +\phi_1(1)x][\phi_2(0)(1-x) +\phi_2(1)x].
$$
For simplicity, we write $A(x)$ for $A(p)$. Suppose that $\phi_1(0)\neq \phi_1(1)$ and $\phi_2(0)\neq
\phi_2(1)$. Otherwise, the question is trivial. By multiplying
$\phi$ by a constant we can suppose that $A(x)$ is of the form
$$A(x)=(x-a)(x-b).$$
Let $x=x^*$ be the critical point of the quadratic function $A$
(i.e., $x^*=\frac{a+b}{2}$).

Using the last lemma, it is easy to find the unique point $x_\alpha$
such that $$A(x_\alpha)=\alpha, \ \ h_{\rm top
}(E_{\Phi}(\alpha))=H(x_\alpha).$$ 
The point $x_{\alpha}$ is the closest to $1/2$ among those $x$ such that $A(x)=\alpha$.

We distinguish three cases.

\medskip

{\em Case I.} $x^* \le 0$ or $x^*\ge 1$ (see Figure \ref{figure1}).\\
\indent 1. $A(x)$ is strictly monotonic in the interval $[0, 1]$.\\
\indent 2. $L_\Phi$ is the interval with end points $ab$ and $(1-a)(1-b)$. \\
\indent 3. For any $\alpha\in L_\Phi$ , $A(x_\alpha)=\alpha$ admits
a unique solution $x_\alpha$ in $[0, 1]$.

 \medskip

{\em Case II.} $0<x^* \le 1/2$ (see Figure \ref{figure2}).\\
\indent 1. $A(x)$ is strictly monotonic  in the intervals $[x^*, 1]$.\\
\indent 2. $L_\Phi$ is the interval with end points $A(x^*)$ and $(1-a)(1-b)$.\\
\indent 3. For any $\alpha\in L_\Phi$, $A(x_\alpha)=\alpha$ admits a
unique solution $x_\alpha$ in $[x^*, 1]$.

 \medskip

{\em Case III.} $1/2\le x^* <1$ (see Figure \ref{figure3}).\\
\indent 1. $A(x)$ is strictly increasing in the interval $[0, x^*]$.\\
\indent 2. $L_\Phi$ is the interval with end points  $ab$ and  $A(x^*)$.\\
\indent 3. For any $\alpha\in L_\Phi$, $A(x_\alpha)=\alpha$ admits a
unique solution $x_\alpha$ in $[0, x^*]$.

  \begin{figure}
\begin{minipage}[t]{0.55\linewidth}
\centering
\includegraphics[width=6.8cm]{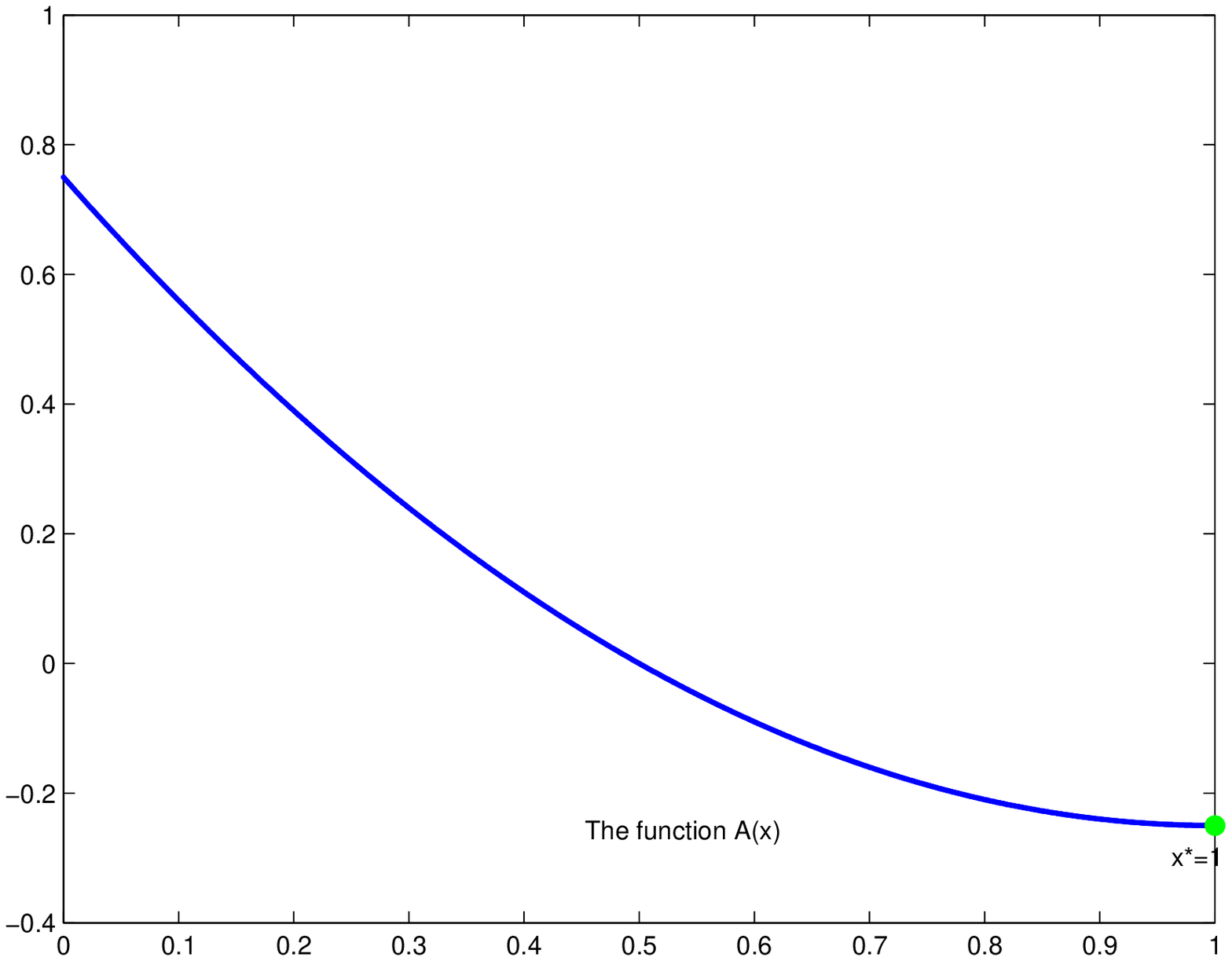}
\end{minipage}%
\begin{minipage}[t]{0.55\linewidth}
\centering
\includegraphics[width=6.8cm]{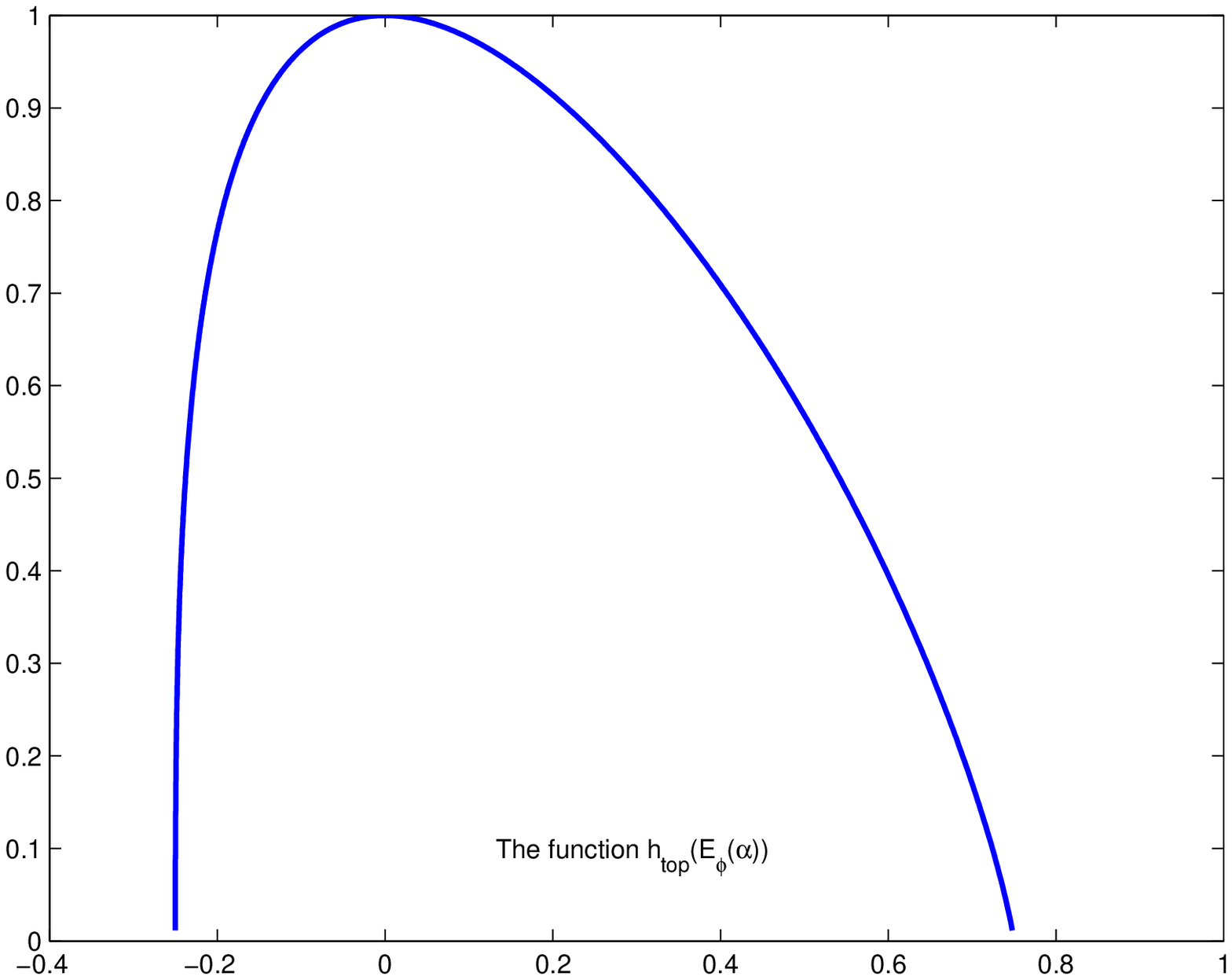}
\end{minipage}
\caption{{\Small Case $x^*=1$ (with $a=0.5$, $b=1.5$)}}
\label{figure1}
\end{figure}

 \begin{figure}
\begin{minipage}[t]{0.55\linewidth}
\centering
\includegraphics[width=6.8cm]{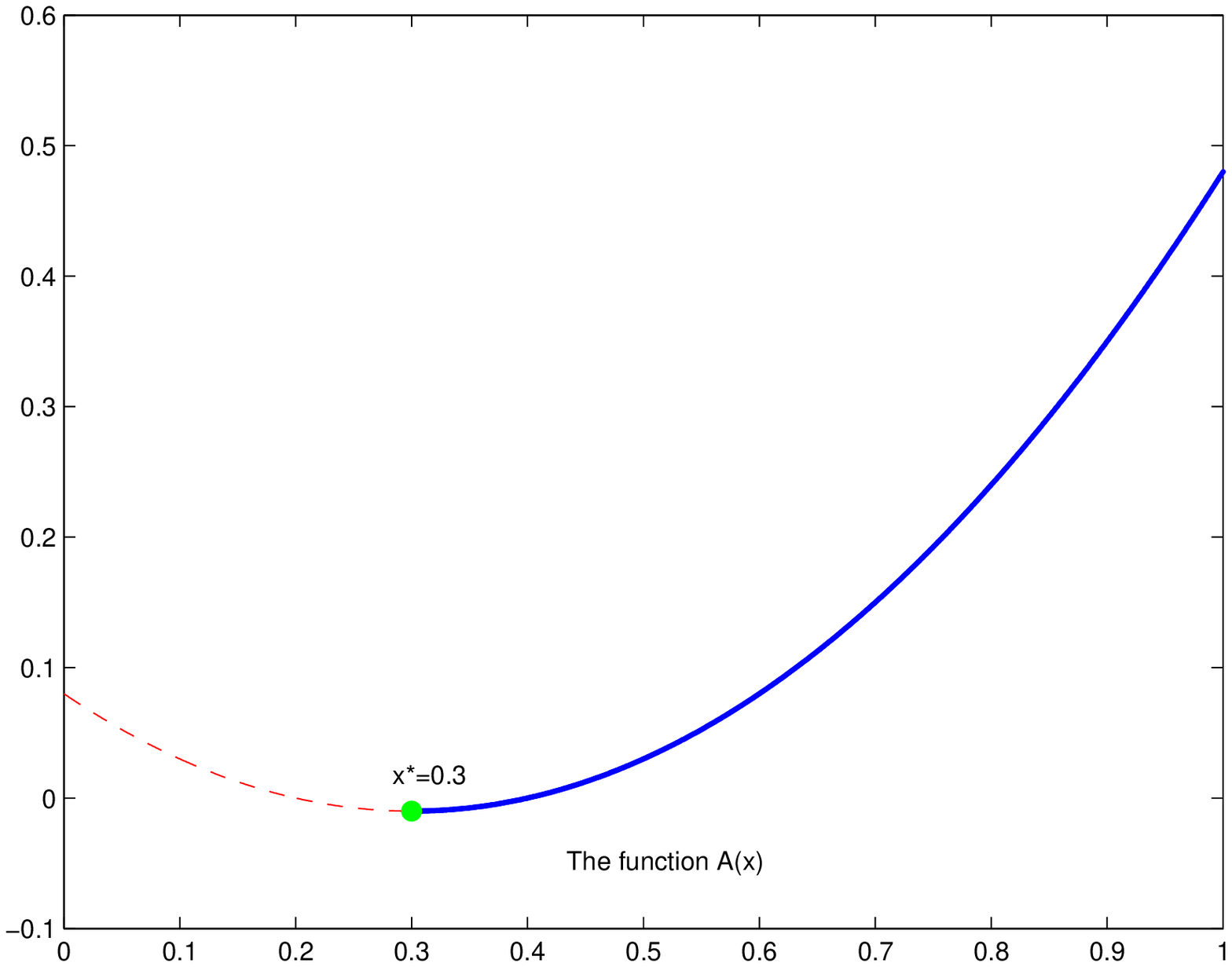}
\end{minipage}%
\begin{minipage}[t]{0.55\linewidth}
\centering
\includegraphics[width=6.8cm]{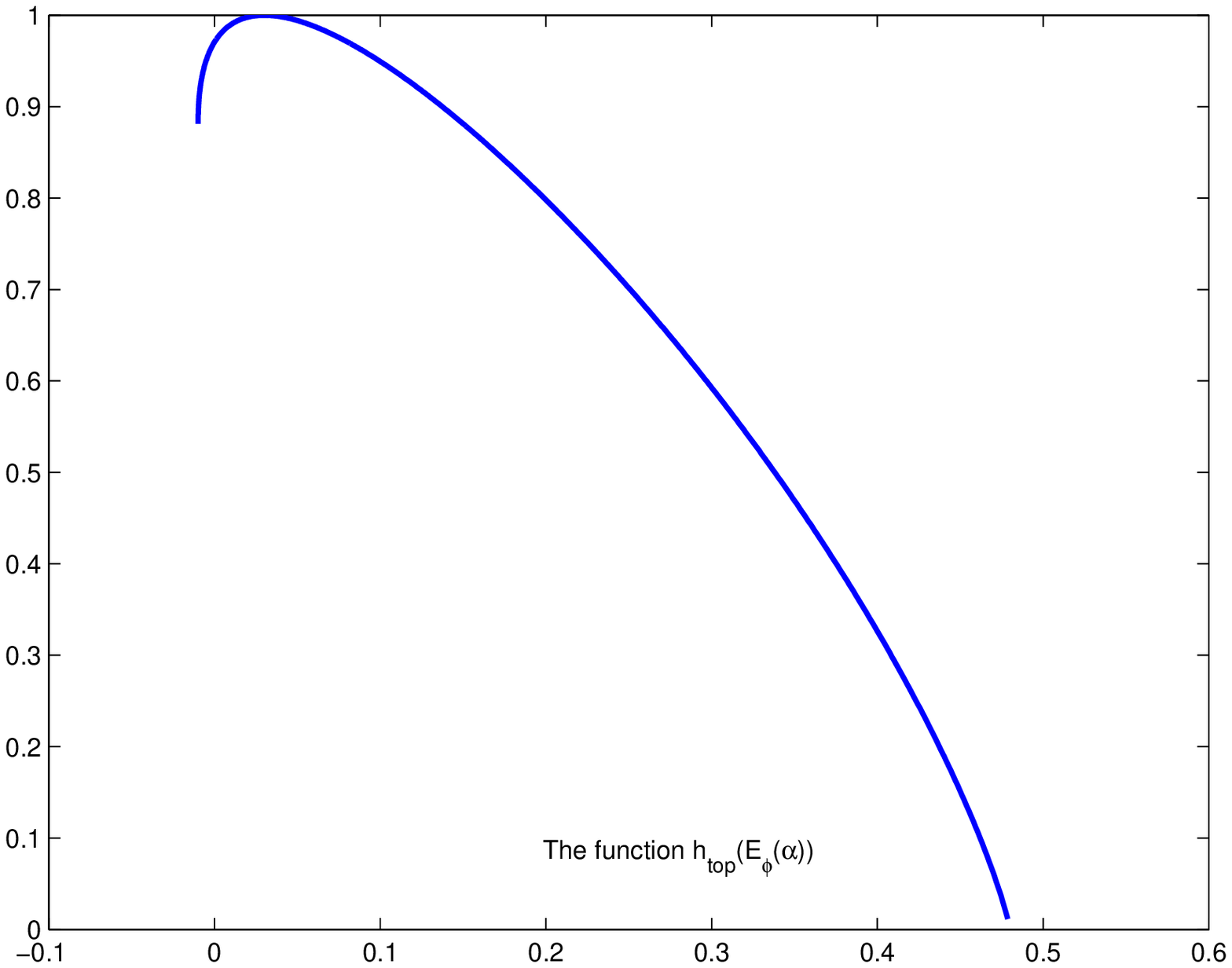}
\end{minipage}
\caption{\Small Case $0<x^*<1/2$ (with $a=0.2, b=0.4$)}
\label{figure2}
\end{figure}

 \begin{figure}
\begin{minipage}[t]{0.55\linewidth}
\centering
\includegraphics[width=6.8cm]{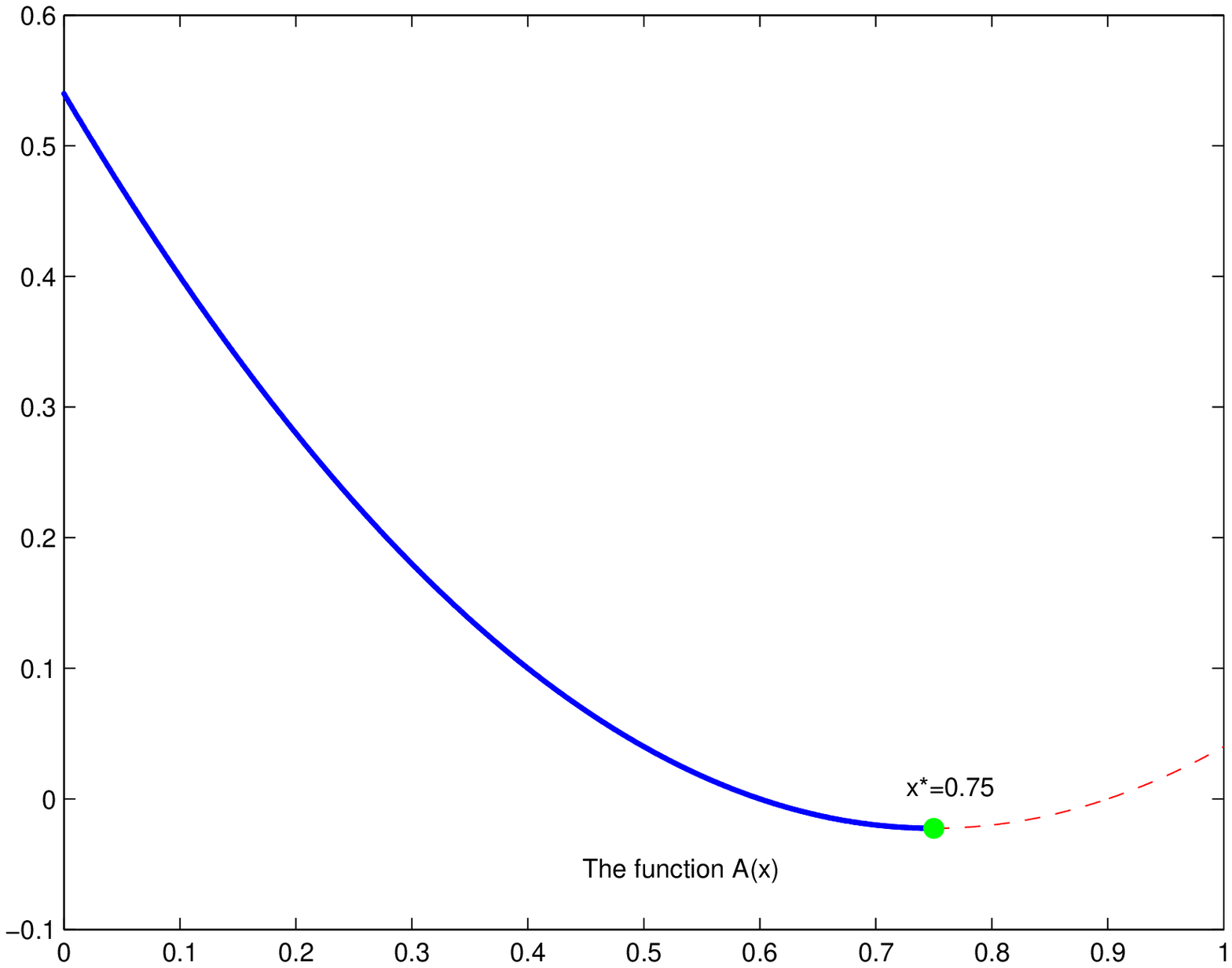}
\end{minipage}%
\begin{minipage}[t]{0.55\linewidth}
\centering
\includegraphics[width=6.8cm]{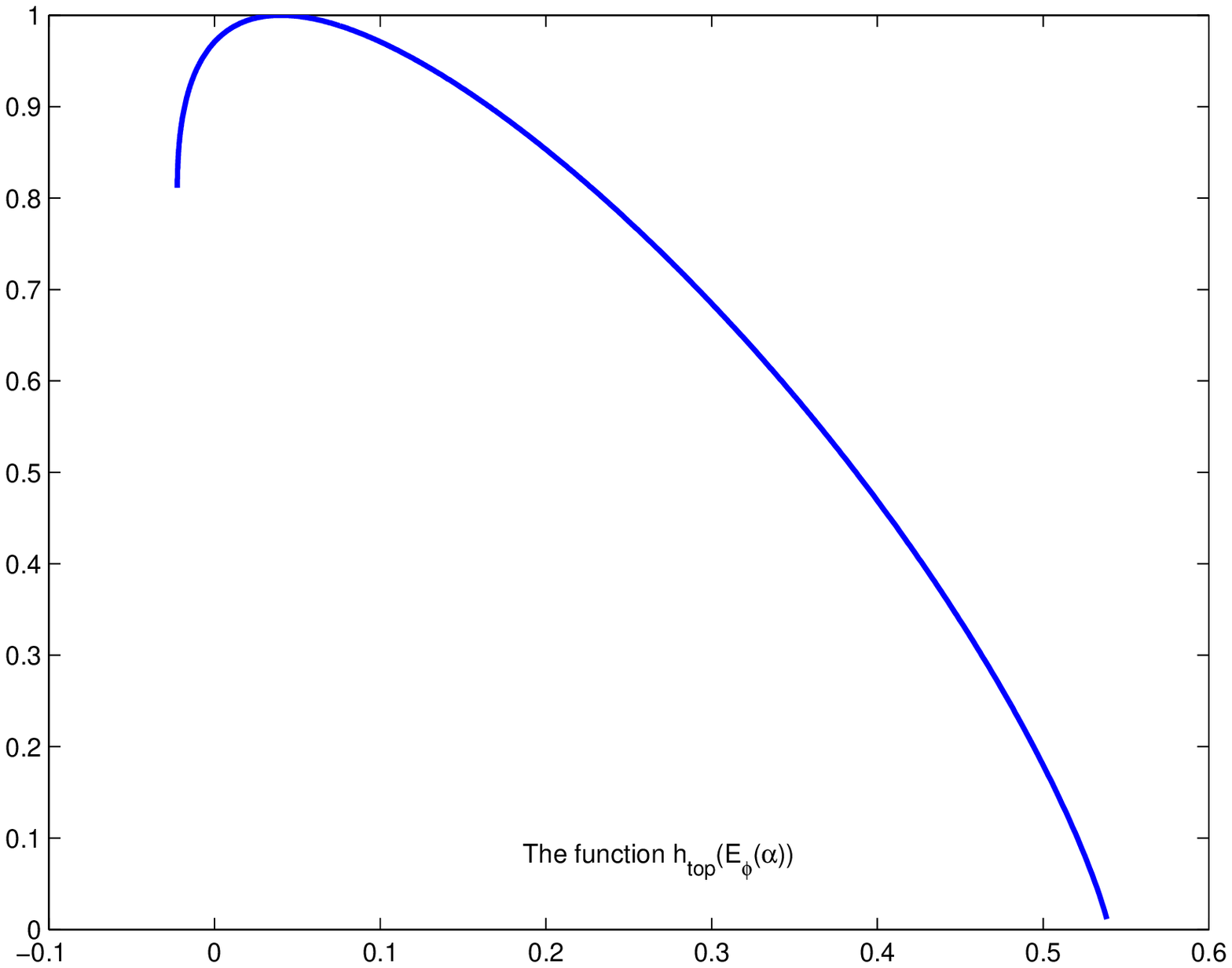}
\end{minipage}
\caption{\Small Case $1/2<x^*<1$ (with $a=0.6, b=0.9$)}
\label{figure3}
\end{figure}

\begin{remark}
We can see in the case $m=2$, $k=1$ and $r=2$ the spectrums are
always continuous (in fact, they are differentiable in the interior of $L_{\Phi}$). In the
following examples we will see that this  is no longer
 the case when $m=2$, $k=1$ and $r=3$.
\end{remark}
\medskip

{\em Example 2.} Consider the case $m=2$, $k=1$ and $r=3$. We have
$$
     A(x) = [\phi_1(0)(1-x) +\phi_1(1)x][\phi_2(0)(1-x) +\phi_2(1)x][\phi_3(0)(1-x)+\phi_3(1)x].
$$

By multiplying $\phi$ by a constant, we can always suppose that $A$ is of
the form
$$A(x)=(x-a)(x-b)(x-c).$$

This cubic polynomial function is either increasing or admit
 a local maximal point $x_{\max}$ and a local minimal point $x_{\min}$ and then we must have
 $x_{\max}<x_{\min}$. As we will see, the continuity of the spectrum depends on the location of $x_{\max}$ and
 $x_{\min}$.

 When $A$ is increasing or when $x_{\max},x_{\min}\notin (0,1)$, $L_\Phi$ is the interval with  $-abc$ and
 $(1-a)(1-b)(1-c)$ as end points. For any $\alpha$ in the interval,
 $A(x_\alpha)=\alpha$ admits a unique solution $x_\alpha$ in $[0,1]$
 and $h_{\rm top}(E_{\Phi}(\alpha))=H(x_\alpha)$. In this
 case the spectrum is continuous (and differentiable).

 Suppose now that $A(x)$ admits a local maximal point $x_{\max}$ and a
 local minimal point $x_{\min}$ (with $x_{\max}<x_{\min}$). Then there
 exist a unique $x'>x_{\min}$ and a unique $x''<x_{\max}$ such that $$A(x')=A(x_{\max}),\ \ A(x'')=A(x_{\min}).$$
 We point out that there are three possible situations: the spectrum
 is continuous, admits one discontinuous point or admits two discontinuous
 points. Before present in detail these three situations we prove the following lemma which will be useful for our discussion.

 \begin{lemma}
 Let $P$ be a polynomial of degree 3 with positive leading
 coefficient. Suppose that $P$ admits a local maximal point
 $x_{\max}$ and a local minimal point $x_{\min}$. Then
 $x_{\max}<x_{\min}$ and
 $$x_1<x_{\max}<x_2,|x_1-x_{\max}|=|x_2-x_{\max}| \Rightarrow P(x_1)<P(x_2)$$
 $$y_1<x_{\min}<y_2,|y_1-x_{\min}|=|y_2-x_{\min}| \Rightarrow P(y_1)<P(y_2)$$
 \end{lemma}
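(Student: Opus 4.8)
The plan is to reduce both displayed implications to a single exact identity for cubics: if $c$ is any critical point of $P$ and $h>0$, then
$$
P(c+h)-P(c-h)=2a h^3,
$$
where $a>0$ denotes the leading coefficient of $P$. Granting this, both conclusions are immediate. Applying it at $c=x_{\max}$ with $h=|x_1-x_{\max}|=|x_2-x_{\max}|$ we have $x_1=c-h$ and $x_2=c+h$, so $P(x_2)-P(x_1)=2ah^3>0$, i.e. $P(x_1)<P(x_2)$; the same computation at $c=x_{\min}$ with $h=|y_1-x_{\min}|=|y_2-x_{\min}|$ gives $P(y_1)<P(y_2)$.

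To establish the ordering $x_{\max}<x_{\min}$ I would argue via $P'$, which is a quadratic with positive leading coefficient $3a$. Its two real roots are exactly the critical points, and $P'<0$ strictly between them while $P'>0$ outside. Thus $P$ increases, then decreases on the interval between the roots, then increases again; the switch from increasing to decreasing occurs at the smaller root (a local maximum) and the switch from decreasing to increasing at the larger root (a local minimum). Hence $x_{\max}<x_{\min}$.

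For the identity itself I would use Taylor's formula about $c$. Since $P$ has degree $3$, its degree-$3$ Taylor polynomial at any point is exact, so
$$
P(c\pm h)=P(c)\pm P'(c)h+\tfrac{1}{2}P''(c)h^2\pm\tfrac{1}{6}P'''(c)h^3.
$$
Subtracting the two expressions cancels the even-order terms in $h$ and doubles the odd-order ones, yielding $P(c+h)-P(c-h)=2P'(c)h+\tfrac{1}{3}P'''(c)h^3$. At a critical point $P'(c)=0$, and for a cubic $P'''\equiv 6a$ is a positive constant, so the right-hand side collapses to $2ah^3$. This is the entire content of the lemma.

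There is no genuine obstacle in the calculation; the only place where the hypotheses are truly used is that $P$ has degree exactly $3$ with positive leading coefficient. It is precisely this that forces $P'''$ to be a nonzero constant with all higher derivatives vanishing, so that the symmetric difference of $P$ about a critical point reduces to the pure monomial $2ah^3$ with a definite positive sign. For a polynomial of higher degree further odd-order terms would survive and the sign could fail, so the statement is sharp for cubics.
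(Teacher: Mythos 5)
Your proof is correct, and it takes a genuinely different route from the paper's. The paper works with the derivative: it writes $P'(x)=\lambda(x-x_{\max})(x-x_{\min})$ with $\lambda>0$, observes that for two points mirrored about $x_{\max}$ the left one satisfies $|P'(u)|/|P'(v)|=|u-x_{\min}|/|v-x_{\min}|>1$ (the graph climbs faster to the left of the maximum than it descends to the right, precisely because $x_{\min}$ lies to the right), and then integrates $P'$ over the two symmetric intervals and compares. Your argument instead produces an exact algebraic identity: expanding $P$ by its (exact, degree-$3$) Taylor polynomial about a critical point $c$ and taking the symmetric difference kills the even-order terms and the $P'(c)h$ term, leaving $P(c+h)-P(c-h)=2ah^3$. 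This buys several things: it handles $x_{\max}$ and $x_{\min}$ in a single stroke (the identity is insensitive to which kind of critical point $c$ is), it gives a quantitative strengthening (the difference equals $2ah^3$, not merely positive), and it avoids the sign bookkeeping in the paper's integration step, which is the one slightly delicate point there (the paper must track that $P'>0$ on $(x_1,x_{\max})$ and compare two integrals after a change of variables). What the paper's approach exposes, and yours hides, is the geometric mechanism — the asymmetry of $|P'|$ at mirrored points, governed by the location of the other critical point — which is the picture that guides the subsequent discussion of the spectra in Section 5. Your closing remark is also apt: the identity makes clear that degree exactly $3$ and positive leading coefficient are precisely what force the sign, so the lemma is sharp for cubics.
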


 \begin{proof}
 The fact $x_{\max}<x_{\min}$ follows from $P(-\infty)=-\infty$ and
 $P(+\infty)=+\infty$. By the existence of the extremal points, we
 can write $$P'(x)=\lambda(x-x_{\max})(x-x_{\min})$$ with $\lambda>0$.
 It follows that
 $$ u<x_{\max}<v, x_{\max}-u=v-x_{\max} \Rightarrow \frac{|P'(u)|}{|P'(v)|}=\frac{|u-x_{\min}|}{|v-x_{\min}|}>1.$$
 This means that for two equidistant points from $x_{\max}$, the
 left point climbs quicker than the right point descents. By
 integration, we get
 $$P(x_1)=P(x_{\max})+\int_{x_{\max}}^{x_1}P'(u)du,\
 P(x_2)=P(x_{\max})+\int_{x_{\max}}^{x_2}P'(u)du.$$ Making the
 change of variable $v-x_{\max}=x_{\max}-u$, we obtain
 $$\int_{x_{\max}}^{x_1}P'(u)du=-\int_{x_1}^{x_{\max}}|P'(u)|du<-\int_{x_{\max}}^{x_2}|P'(v)|dv\le
 P(x_2)-P(x_{\max}).$$ The first equality holds since $P'$ is positive in $(x_{1},x_{\max})$. Hence $P(x_1)<P(x_2)$. We prove
 $P(y_1)<P(y_2)$ in the same way.

 \end{proof}

In the following we present three situations. We use the last two lemmas. In each situation, there is a unique point $x_\alpha$
such that $$A(x_\alpha)=\alpha,\
h_{\rm top}(E_\Phi(\alpha))=H(x_\alpha).$$ We call $x_\alpha$ the
maximizing point. For every $\alpha\in L_{\Phi}$, there could be
one, two or three points $x$ such that $A(x)=\alpha$. The
maximizing point $x_{\alpha}$ is the one which is the nearest to 1/2. In Figures
\ref{figure4}, \ref{figure5} and \ref{figure6}, those  parts of graph of $A$
corresponding to the maximizing points will be traced by solid lines,
other parts  will be traced by dotted lines.

\medskip

{\em Situation I.} $1/2\le x_{\max} <1<x_{\min}$ (see Figure \ref{figure4}).\\
Let $a=0.4$, $b=1$, and $c=2$. Then $x_{\max}=2/3$ and
$x_{\min}=1.6$. The spectrum is continuous. The following hold:\\
\indent 1. $L_\Phi=[A(0),A(x_{\max})]$.\\
\indent 2. The maximizing points lie in $[0,x_{\max}]$.\\
\indent 3. $A(x)$ is strictly monotonic in $[0,x_{\max}]$.

  \begin{figure}
\begin{minipage}[t]{0.55\linewidth}
\centering
\includegraphics[width=6.8cm]{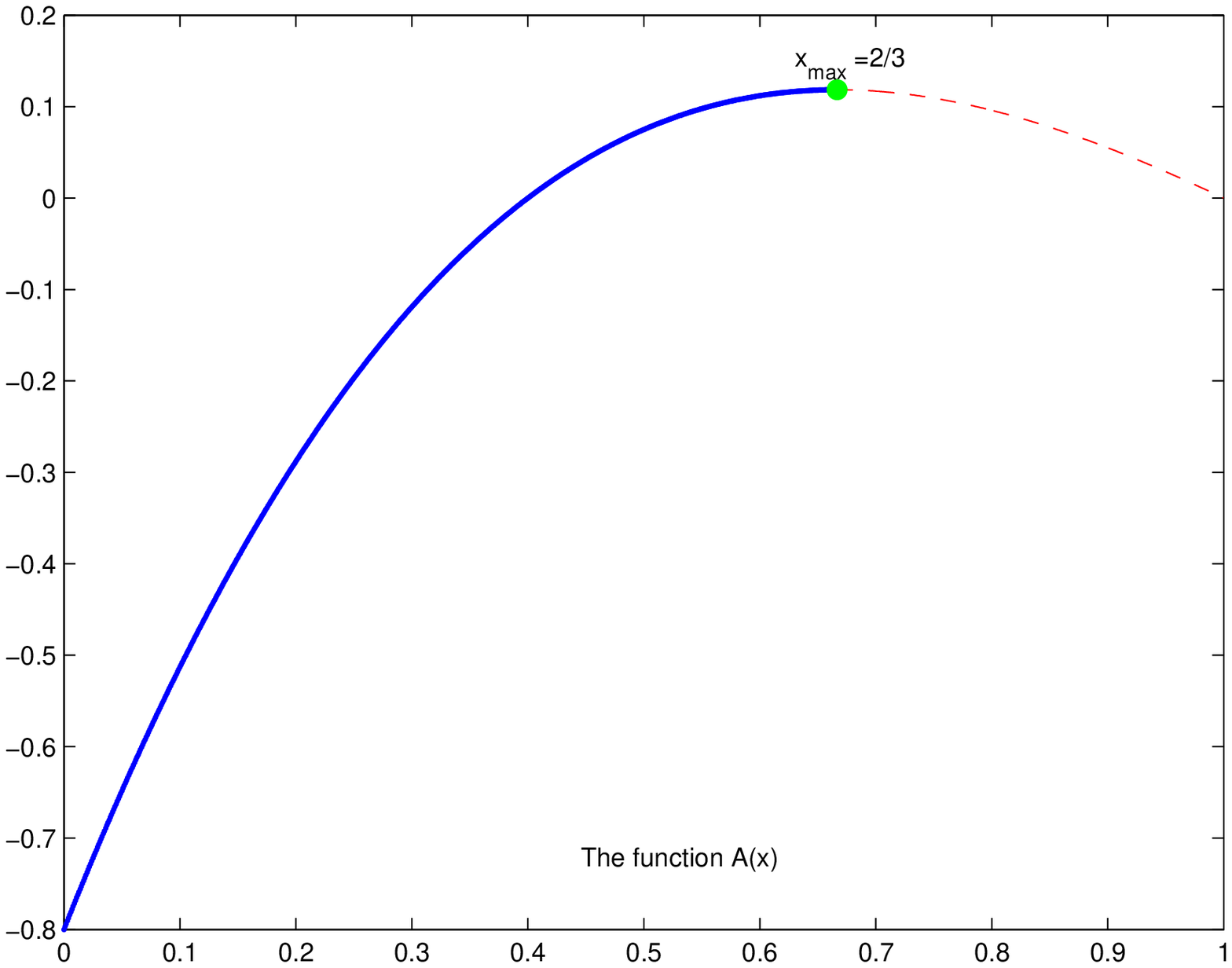}
\label{figure 1.1}
\end{minipage}%
\begin{minipage}[t]{0.55\linewidth}
\centering
\includegraphics[width=6.8cm]{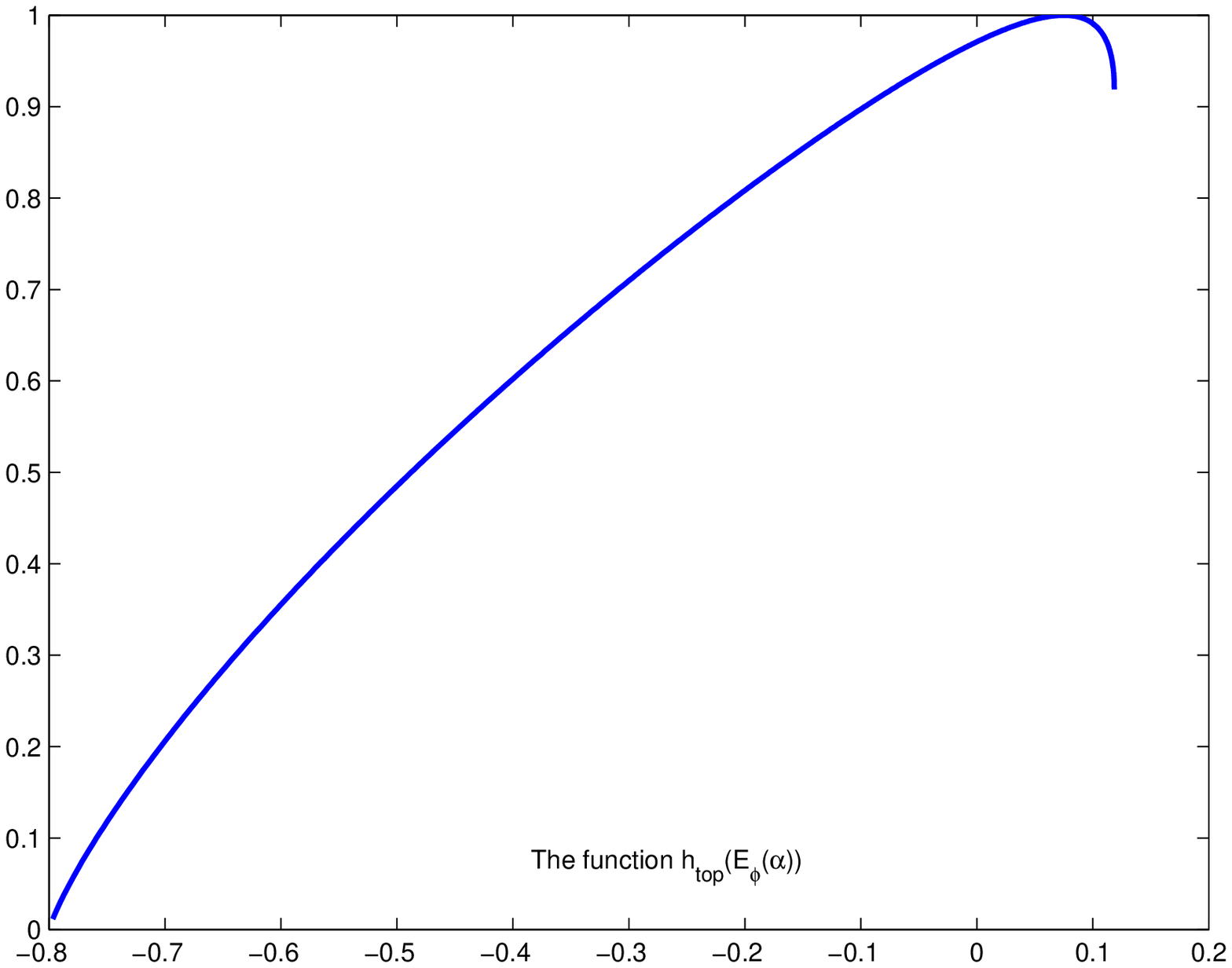}
\end{minipage}
\caption{\Small Situation $1/2< x_{\max} <1<x_{\min}$ ($a=0.4$,
$b=1$, and $c=2$)} \label{figure4}
\end{figure}

\medskip

{\em Situation II.} $1/2\le x_{\max} <x_{\min}<1$ (see Figure \ref{figure5}).\\
Let $a=0.4$, $b=0.7$, and $c=0.8$. Then $x_{\max}=0.5131$,
$x_{\min}=0.7375$ and $x'=0.8737$. The spectrum admits one
discontinuous point. The following hold:\\
\indent 1. $L_\Phi=[A(0),A(1)]$.\\
\indent 2. The maximizing points lie in $[0,x_{\max}]\cup (x',1]$.\\
\indent 3. $A(x)$ is strictly monotonic in each of above two intervals.\\
\indent 4. The spectrum has one discontinuous point at
$A(x_{\max})(=A(x'))$, the entropy jumps from $H(x_{\max})$ to
$H(x')$.

 \begin{figure}
\begin{minipage}[t]{0.55\linewidth}
\centering
\includegraphics[width=6.8cm]{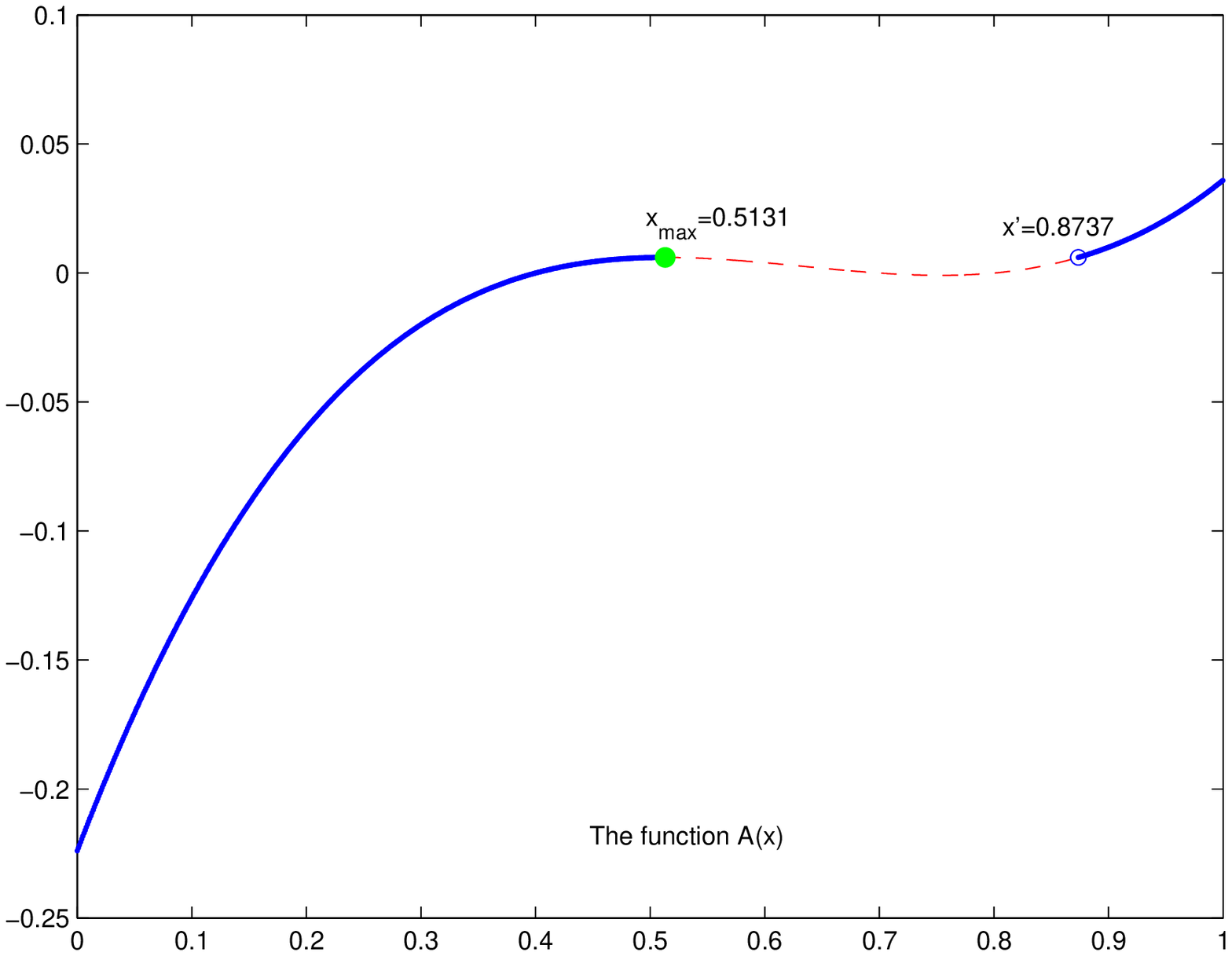}
\end{minipage}%
\begin{minipage}[t]{0.55\linewidth}
\centering
\includegraphics[width=6.8cm]{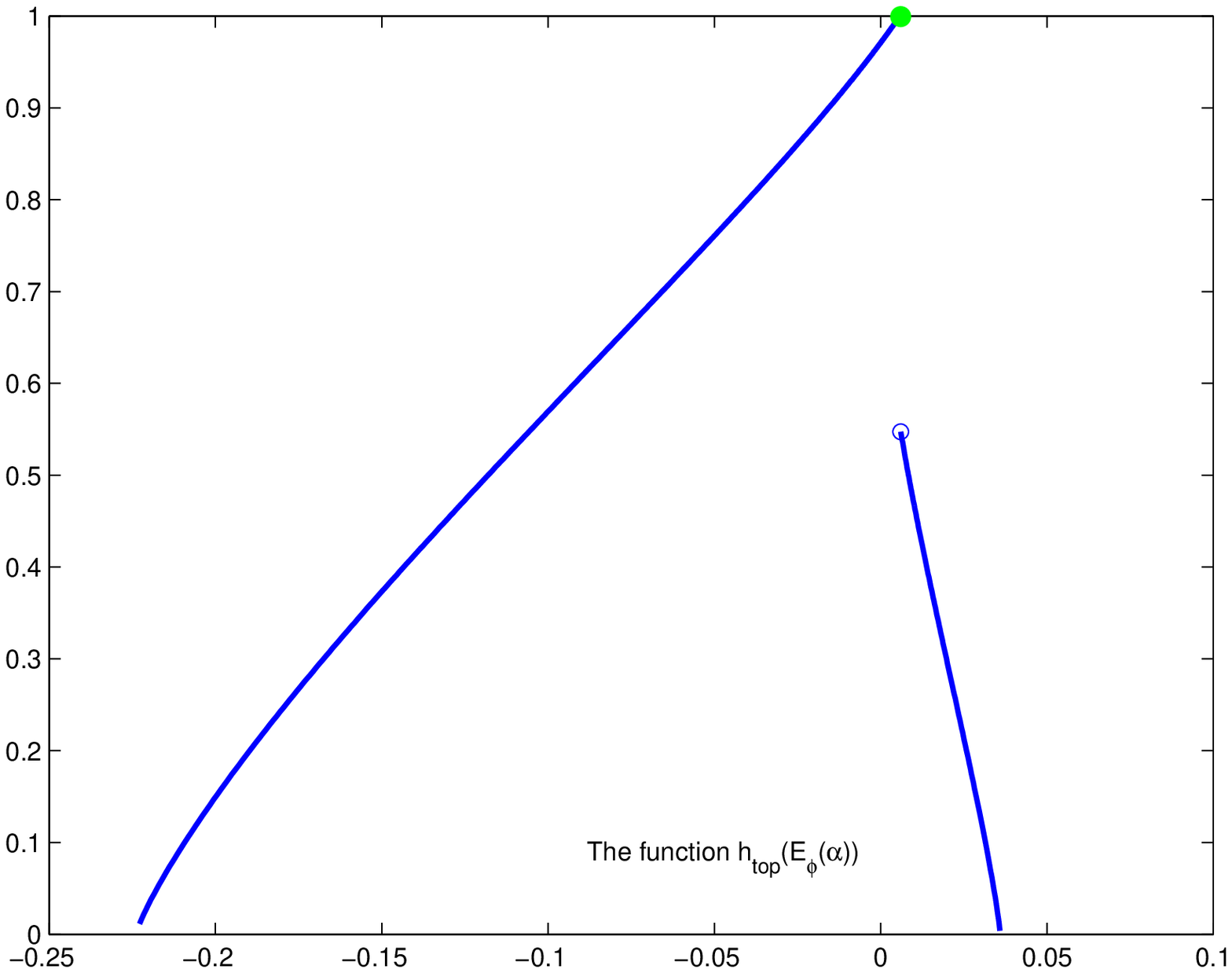}
\end{minipage}
\caption{\Small Situation $1/2< x_{\max} <x_{\min}<1$ ($a=0.4$,
$b=0.7$, and $c=0.8$)} \label{figure5}
\end{figure}

\medskip

{\em Situation III.} $0<x_{\max} <1/2< x_{\min}<1$ (see Figure
\ref{figure6}).\\
 Let $a=0.15$, $b=0.7$, and $c=0.8$. Then
$x_{\max}=0.3479$, $x_{\min}=0.7520$, $x'=0.9541$ and $x''=0.1458$.
The spectrum admits two discontinuous points. The following hold:\\
\indent 1. $L_\Phi=[A(0),A(1)]$.\\
\indent 2. The maximizing points lie in the intervals $[0,x'')\cup [x_{\max},x_{\min}]\cup (x',1]$.\\
\indent 3. $A(x)$ is strictly monotonic in each of above three intervals.\\
\indent 4. The spectrum has two discontinuity points. One is
$A(x'')(=A(x_{\min}))$, where the entropy jumps from $H(x'')$ to
$H(x_{\min})$, the other is $A(x_{\max})(=A(x'))$, where the entropy
jumps from $H(x_{\max})$ to $H(x')$.

 \begin{figure}
\begin{minipage}[t]{0.55\linewidth}
\centering
\includegraphics[width=6.8cm]{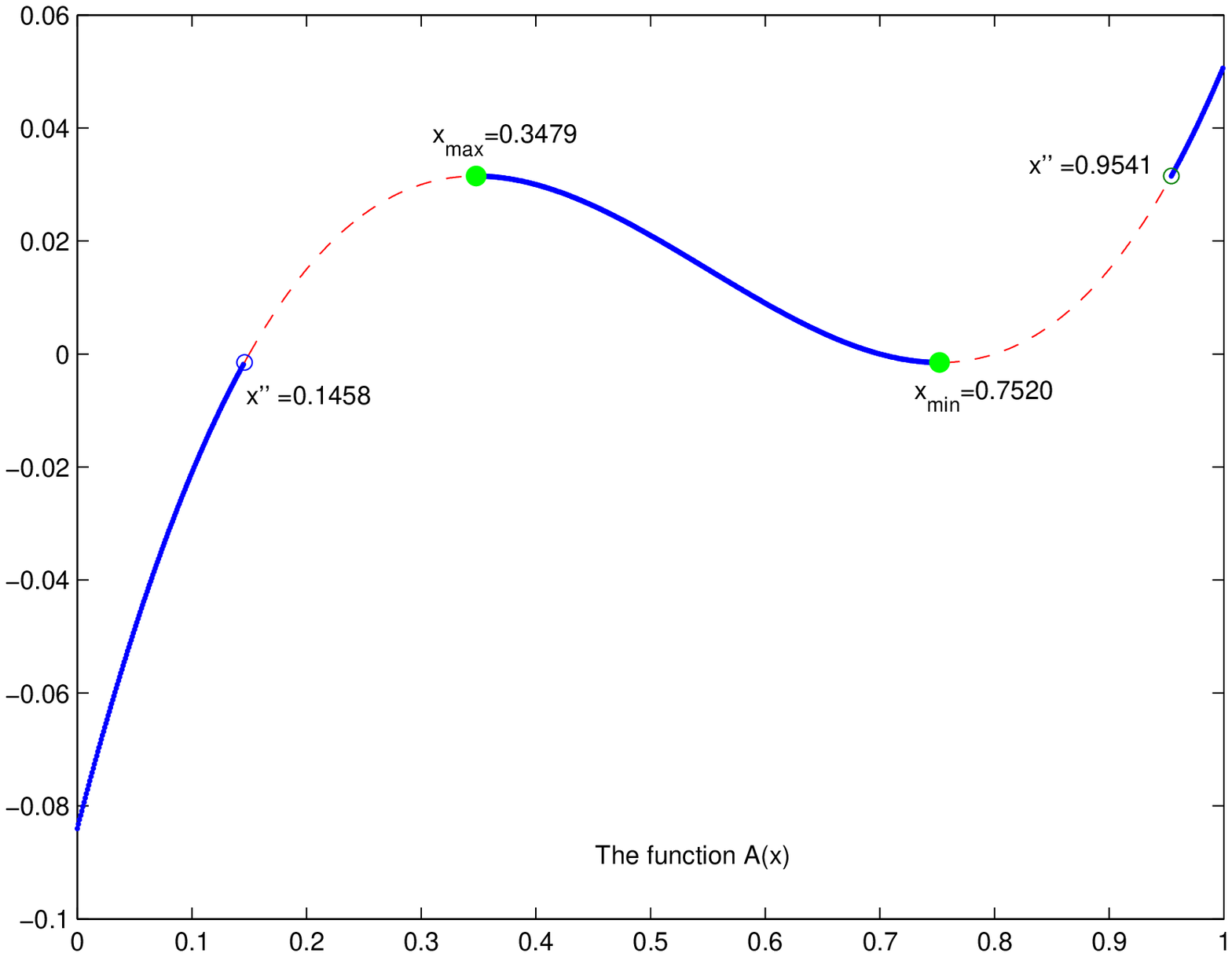}
\end{minipage}%
\begin{minipage}[t]{0.55\linewidth}
\centering
\includegraphics[width=6.8cm]{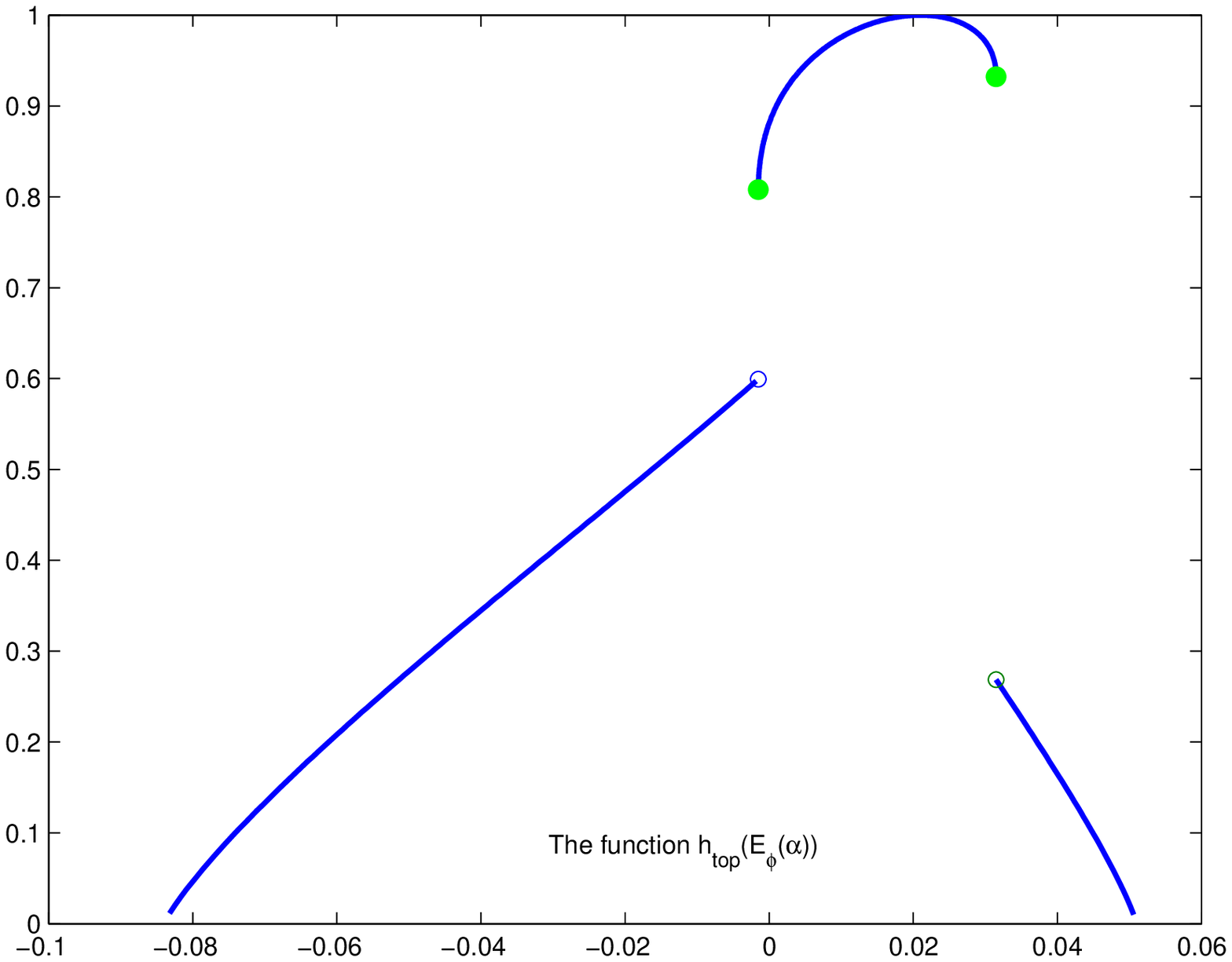}
\end{minipage}
\caption{\Small Situation $0< x_{\max}<1/2 <x_{\min}<1$ ($a=0.15$,
$b=0.7$, and $c=0.8$)} \label{figure6}
\end{figure}

\end{document}